\newtheorem{thm}{Theorem}[section]
\newtheorem{lem}[thm]{Lemma}
\newtheorem{prop}[thm]{Proposition}
\newtheorem{ex}[thm]{Example}
\newtheorem{que}[thm]{Question}
\newtheorem*{prob*}{Open problem}
\theoremstyle{definition}
\newtheorem{defi}[thm]{Definition}
\theoremstyle{remark}
\newtheorem{rem}[thm]{Remark}
\newtheorem*{rem*}{Remark}
\DeclareMathOperator{\s}{span}
\newcommand{\kringel}{\mathbin{\raise1pt\hbox{$\scriptstyle\circ$}}}
\newcommand{\pkt}{\mathbin{\raise0pt\hbox{$\scriptstyle\bullet$}}}
\newcommand{\C}{\mathbb{C}}
\newcommand{\F}{\mathbb{F}}
\newcommand{\N}{\mathbb{N}}
\newcommand{\Q}{\mathbb{Q}}
\newcommand{\Z}{\mathbb{Z}}
\newcommand{\diag}{\mathop{\rm diag}}
\newcommand{\La}{\mathfrak{a}}
\newcommand{\Lb}{\mathfrak{b}}
\newcommand{\Lf}{\mathfrak{f}}
\newcommand{\Lg}{\mathfrak{g}}
\newcommand{\Lh}{\mathfrak{h}}
\newcommand{\Ln}{\mathfrak{n}}
\newcommand{\CN}{\mathcal{N}}
\newcommand{\al}{\alpha}
\newcommand{\be}{\beta}
\newcommand{\ra}{\rightarrow}
\renewcommand{\phi}{\varphi}
\begin{document}


\title[Derived Length]{Derived length and nildecomposable Lie algebras}

\author[D. Burde]{Dietrich Burde}
\address{Fakult\"at f\"ur Mathematik\\
Universit\"at Wien\\
  Nordbergstr. 15\\
  1090 Wien \\
  Austria}
\email{dietrich.burde@univie.ac.at}

\date{\today}

\subjclass[2000]{Primary 17B30, 17D25}
\thanks{The author is supported by the FWF, Project P21683.}

\begin{abstract}
We study the minimal dimension of solvable and nilpotent Lie algebras
over a field of characteristic zero with given derived length $k$. 
This is motivated by questions on nildecomposable Lie algebras $\Lg=\La+\Lb$,
arising in the context of post-Lie algebra structures. The question is, how the derived 
length of $\Lg$ can be estimated in terms of the derived length and nilpotency classes 
of the two nilpotent subalgebras $\La$ and $\Lb$. 
\end{abstract}

\maketitle

\section{Introduction}

What is the minimal dimension of a finite-dimenisonal nilpotent Lie algebra over a field of 
characteristic zero with given derived length $k$ ? This question has been studied by various 
authors. In general, there exist only lower and upper bounds for this dimension. We review 
the known results and provide a new lower bound for the case $k=4$ by constructing a rational
nilpotent Lie algebra of dimension $14$ with derived length $k=4$. The corresponding question 
for finite $p$-groups has a long history, going back to work of Burnside in $1913$. Our 
motivation to study this minimal dimension comes from open questions about nildecomposable Lie algebras.
A finite dimensional Lie algebra is called {\it nildecomposable}, if it is the sum of two 
nilpotent subalgebras. Although it is very interesting to consider also infinite-dimensional Lie algebras,
we assume here all Lie algebras to be finite-dimensional. We write $\Lg=\La+\Lb$. The sum is understood as 
a vector space sum and need not be direct. It is a well known result that a nildecomposable Lie algebra over 
a field of characteristic zero is solvable. This was first proved by Goto in \cite{GOT}. 
It is easy to see that we may assume that the field is algebraically closed. In characteristic 
$p$ the situation is as usual more complicated. A nildecomposable Lie algebra is solvable 
over a field of characteristic $p$ with $p\ge 5$, see \cite{ZUS} and the references therein. 
There is a counterexample in characteristic $2$, see \cite{PE1}.  
Historically the question was first asked about nildecomposable groups $G=AB$.
There are numerous results for such groups. We will give a short survey in section $6$. \\[0.2cm]
Decompositions of the form $\Lg=\La+\Lb$ for Lie algebras arise also in geometry.
We have recently obtained some results on the existence of so called post-Lie
algebra structures on pairs of Lie algebras $(\Lg,\Ln)$, see \cite{BU33}, \cite{BU41}, \cite{BU44} for
details and the geometric background. One conclusion was, that if $\Lg$ is nilpotent, such a structure 
can only exist, if $\Ln$ is solvable. The proof proceeded to show that $\Ln$ must be nildecomposable,
hence solvable. \\[0.2cm]
A very subtle question is about the {\it derived length} of a nildecomposable Lie algebra.
If $\Lg=\La+\Lb$, then it is believed that the derived length $d(\Lg)$ of $\Lg$
is bounded by a linear function in the nilpotency classes $c(\La)$ and $c(\Lb)$
of $\La$ and $\Lb$. The question is even, if it is possible to find a counterexample
to the estimate
\[
d(\Lg) \le c(\La) + c(\Lb),
\]
or not. If $\La$ and $\Lb$ are abelian, then the above estimate is true, since then $d(\Lg) \le 2$ 
by It\^{o}'s theorem. The investigate the cases $d(\Lg) \ge 3$ we need to study the minimal dimension 
of solvable and nilpotent Lie algebras with given derived length $k$.

\section{Preliminaries}

Let $\Lg$ be a finite-dimensional Lie algebra over an arbitrary field $K$.
The {\it lower central series} of $\Lg$ is defined by 
\[
\Lg^0=\Lg \supseteq \Lg^1 \supseteq \Lg^2 \supseteq \Lg^3  \supseteq \cdots
\]
where the ideals $\Lg^i$ are recursively defined by $\Lg^i=[\Lg,\Lg^{i-1}]$
for all $i\ge 1$. Thus we have $\Lg^0=\Lg$, $\Lg^1=[\Lg,\Lg]$, $\Lg^2=[\Lg,[\Lg,\Lg]]$,  
and so on. The {\it derived series} of $\Lg$ is defined by
\[
\Lg^{(0)}=\Lg \supseteq \Lg^{(1)} \supseteq \Lg^{(2)} \supseteq \Lg^{(3)}  \supseteq \cdots
\]
where the ideals $\Lg^{(i)}$ are recursively defined by $\Lg^{(i)}=[\Lg^{(i-1)},\Lg^{(i-1)}]$
for all $i\ge 1$. We have  $\Lg^{(0)}=\Lg$, $\Lg^{(1)}=[\Lg,\Lg]$, $\Lg^{(2)}=[[\Lg,\Lg], [\Lg,\Lg]]$,  
and so on. A Lie algebra $\Lg$ is called {\it nilpotent}, if $\Lg^k=0$ for some $k$. It is called
{\it solvable}, if $\Lg^{(k)}=0$ for some $k$. Every nilpotent Lie algebra is also solvable, and we
have the inclusion
\[
\Lg^{(k)} \subseteq \Lg^{2^k-1} \text{ for all }k\ge 1.
\]

\begin{defi}
A Lie algebra $\Lg$ of dimension $n\ge 1$ is called $k$-step nilpotent if $\Lg^k=0$, but 
$\Lg^{k-1}\neq 0$ for some $k\ge 1$. In this case we call $k$ the {\it nilpotency class} of $\Lg$, and
denote it by $c(\Lg)$. If $c(\Lg)=n-1$, then $\Lg$ is called {\it filiform nilpotent}.
\end{defi}

Denote by $\Lf$ the Lie algebra with basis $\{ e_1,\ldots ,e_n\}$, where the only non-trivial
Lie brackets are given by $[e_1,e_i]=-[e_i,e_1]=e_{i+1}$ for $2\le i\le n-1$. Writing
$\Lf_k=\s \{e_k,\ldots ,e_n \}$ we have $\Lf^1=\Lf_3$, $\Lf^2=\Lf_4$, $\ldots$ , $\Lf^{n-2}=\Lf_n=\s \{e_n \}$ 
and $\Lf^{n-1}=0$. Hence $c(\Lf)=n-1$ and $\Lf$ is filiform nilpotent. This algebra is called the standard 
graded filiform nilpotent Lie algebra of dimension $n$.

\begin{defi}
A Lie algebra $\Lg$ of dimension $n\ge 1$ is called $k$-step solvable if $\Lg^{(k)}=0$, but 
$\Lg^{(k-1)}\neq 0$ for some $k\ge 1$. In this case we call $k$ the {\it solvability class} of $\Lg$, and
denote it by $d(\Lg)$. It is also called the {\it derived length} of $\Lg$.
\end{defi}

For example, the filiform nilpotent Lie algebra $\Lf$ from above satisfies $d(\Lf)=2$, since
\begin{align*}
\Lf^{(1)} & = \Lf_3\neq 0, \\
\Lf^{(2)} & = [\Lf^{(1)},\Lf^{(1)}]=[\Lf_3,\Lf_3]=0.
\end{align*}

\begin{defi}
Let $k\ge 1$ be a positive integer. Denote by $\al (k)$ the 
minimal dimension of a {\it nilpotent} Lie algebra over a field $K$ of characteristic zero with derived 
length $d(\Lg)=k$. Denote by $\be (k)$ the minimal dimension of a {\it solvable} Lie algebra over a 
field $K$ of characteristic zero with derived length $d(\Lg)=k$.
\end{defi}

\begin{rem}
One can also define $\al(k)$ and $\be(k)$ by allowing an arbitrary field $K$. Note that this makes 
a difference. In fact, in prime characteristic sometimes exist new Lie algebras of smaller dimension 
with derived length $k$, which have no counterpart in characteristic zero. An example is discussed
in remark $\ref{3.4}$. 
\end{rem}

\begin{defi}
An algebra $(A,\cdot)$ over a field $K$ with bilinear product $(x,y) \mapsto x\cdot y$
is called a {\it left-symmetric algebra}, if the product is left-symmetric, i.e., if the identity
\begin{equation*}
x\cdot (y\cdot z)-(x\cdot y)\cdot z= y\cdot (x\cdot z)-(y\cdot x)\cdot z
\end{equation*}
is satisfied for all $x,y,z \in A$. 
\end{defi}

A left-symmetric algebra is sometimes called pre-Lie algebra. In fact, the
commutator $[x,y]:=x\cdot y-y\cdot x$ defines a Lie bracket. An associative
algebra is a special case of a left-symmetric algebra.

\begin{defi}
An {\it affine structure} on a Lie algebra $\Lg$ over $K$ is a left-symmetric product 
$\Lg \times \Lg \rightarrow \Lg$ satisfying $[x,y]=x\cdot y-y\cdot x$ for all $x,y \in \Lg$.
\end{defi}

Affine structures on Lie algebras and left-symmetric algebras have been studied intensively.
For a survey see \cite{BU24}. We recall the following result.

\begin{prop}\label{2.7}
Let $\Lg$ be a finite-dimensional Lie algebra over a field $K$ of characteristic zero.
Assume that $\Lg$ admits a non-singular derivation. Then $\Lg$ is nilpotent and admits
an affine structure.
\end{prop}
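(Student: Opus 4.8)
The statement is really two independent claims, and I would prove them separately.

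\textbf{Nilpotency.} This is Jacobson's theorem that a Lie algebra of characteristic zero with a non-singular derivation is nilpotent, and I would deduce it from Lie's and Engel's theorems. First I would extend scalars to $\ov K$: a non-singular derivation extends to one, and neither nilpotency nor non-singularity is affected, so I may assume $K$ algebraically closed. Next I would check that $\Lg$ is solvable. The radical $\Lr=\rad(\Lg)$ is a characteristic ideal, hence $D$-invariant, and as $D$ is bijective it induces a non-singular derivation $\ov D$ on the semisimple quotient $\Lg/\Lr$; recalling that every derivation of a semisimple Lie algebra over a field of characteristic zero is inner, $\ov D=\ad z$ for some $z$, but then $z\in\ker\ov D$, so $\ov D$ can only be non-singular if $\Lg/\Lr=0$. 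Thus $\Lg$ is solvable. Then I would form the semidirect product $\Lh=\Lg\rtimes KD$ with $D$ acting on $\Lg$ via the derivation; since $D$ is onto, $[\Lh,\Lh]=[\Lg,\Lg]+D(\Lg)=\Lg$, so the derived series of $\Lh$ mirrors that of $\Lg$ and $\Lh$ is solvable as well. By Lie's theorem the representation of $\Lh$ on $\Lg$ given by $\ad$ on $\Lg$ and by $D$ on $KD$ stabilises a complete flag $0\subset V_1\subset\cdots\subset V_n=\Lg$. Writing $\chi_i\colon\Lg\to K$ for the $i$-th diagonal weight of $\ad x$ along the flag and $\de_i$ for that of $D$, evaluation of $[D,\ad x]=\ad(Dx)$ on $v_i\in V_i$ modulo $V_{i-1}$ makes the left-hand side reduce to $\chi_i(x)\de_i-\de_i\chi_i(x)=0$, whence $\chi_i(Dx)=0$ for all $x$; surjectivity of $D$ then forces $\chi_i=0$. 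So every $\ad x$ is strictly upper triangular along the flag, hence nilpotent, and $\Lg$ is nilpotent by Engel's theorem.

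\textbf{Affine structure.} For this half I would just write the structure down by hand; notably it uses only the invertible derivation and not the nilpotency. Put $\la_x:=D^{-1}\circ\ad x\circ D\in\End(\Lg)$ and define $x\cdot y:=\la_x(y)=D^{-1}([x,Dy])$. Using that $D$ is a derivation, $D[x,y]=[Dx,y]+[x,Dy]$, so
\[
x\cdot y-y\cdot x=D^{-1}\bigl([x,Dy]+[Dx,y]\bigr)=D^{-1}\bigl(D[x,y]\bigr)=[x,y],
\]
and the product induces the given bracket. Since $\ad$ is a Lie algebra homomorphism and $\la$ is its conjugate by $D$, one has $[\la_x,\la_y]=D^{-1}[\ad x,\ad y]D=D^{-1}\ad([x,y])D=\la_{[x,y]}=\la_{x\cdot y-y\cdot x}$. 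For any bilinear product the associator $x\cdot(y\cdot z)-(x\cdot y)\cdot z$ equals $(\la_x\la_y-\la_{x\cdot y})(z)$, so the identity $[\la_x,\la_y]=\la_{x\cdot y-y\cdot x}$ says exactly that this associator is symmetric in $x$ and $y$, i.e. the product is left-symmetric. Together with $x\cdot y-y\cdot x=[x,y]$ this means $\cdot$ is an affine structure on $\Lg$.

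\textbf{Where the difficulty lies.} The affine-structure part is a short computation once the formula $x\cdot y=D^{-1}([x,Dy])$ is guessed, so the substantive point is the nilpotency statement; there the delicate step is squeezing the vanishing of every weight $\chi_i$ out of the single relation $[D,\ad x]=\ad(Dx)$ together with the bookkeeping modulo $V_{i-1}$ along the flag. An economical alternative is to cite Jacobson's theorem for the nilpotency and present only the explicit affine structure.
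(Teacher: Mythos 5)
Your proof is correct. Note, however, that the paper does not prove Proposition \ref{2.7} at all: it is ``recalled'' with a pointer to the survey \cite{BU24}, so there is nothing to match your argument against line by line. What you have written is a complete, self-contained proof of both halves by the standard routes. The nilpotency half is exactly Jacobson's classical argument: reduce to $\ov K$, kill the semisimple quotient using that derivations of semisimple algebras are inner together with $\ov D(z)=[z,z]=0$, pass to $\Lh=\Lg\rtimes KD$, and extract $\chi_i\circ D=0$ from the vanishing diagonal of $[D,\ad x]=\ad(Dx)$ on a Lie flag; all the small verifications you need ($\rad(\Lg)$ is $D$-invariant, the induced derivation on the quotient is again bijective, $[\Lh,\Lh]=\Lg$ because $D$ is onto) are in place. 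The affine-structure half is the standard construction from a bijective $1$-cocycle: $D$ is an invertible cocycle for the adjoint representation, $x\cdot y=D^{-1}([x,Dy])$ has left multiplications $\la_x=D^{-1}\circ\ad x\circ D$ forming a representation, and your identity $[\la_x,\la_y]=\la_{x\cdot y-y\cdot x}$ is precisely left-symmetry. Your remark that this half uses only the invertibility of $D$ and not nilpotency or the characteristic is accurate and worth keeping. The only stylistic point: since the paper treats this as a known result, the ``economical alternative'' you mention at the end (cite Jacobson, exhibit the product) is what the paper effectively does.
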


\section{Derived length of nilpotent Lie algebras}

What can we say about $\al(k)$, the minimal dimension of a nilpotent Lie algebra 
over a field $K$ of characteristic zero with derived length $k$ ? 
In general it is impossible to determine this dimension explicitly. However, we have
lower and upper bounds for $\al(k)$. These bounds, and here in particular the lower bounds,
have been discussed in the literature by various authors. Our main reference is the paper of 
Bokut \cite{BOK}, which cites previous work by Hall, Dixmier and Patterson \cite{PAT}. 
See also \cite{BRS} for more results on $2$-generated Lie algebras. \\
It is not difficult for us to provide an upper bound of the form $\al(k)\le 2^k-1$ for all $k\ge 2$.

\begin{prop}\label{3.1}
For any $k\ge 2$ there is a $\N$-graded filiform nilpotent Lie algebra $\Lf$, defined over $\Q$ 
of dimension $2^k-1$ with $d(\Lf)=k$. This Lie algebra admits an non-singular derivation and
hence an affine structure.
\end{prop}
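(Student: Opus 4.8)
The plan is to realize $\Lf$ as a suitable truncation of the positive part of the Witt algebra. Fix $n=2^k-1$ and let $\Lf$ be the Lie algebra with basis $\{e_1,\dots,e_n\}$ and brackets
\[
[e_i,e_j]=(j-i)\,e_{i+j},
\]
with the convention $e_m=0$ for $m>n$. First I would check that this is indeed a Lie algebra over $\Q$: antisymmetry is immediate, and the Jacobi identity for a triple $(e_i,e_j,e_l)$ reduces to the numerical identity
\[
(j-i)(l-i-j)+(l-j)(i-j-l)+(i-l)(j-l-i)=0,
\]
which is the usual computation making the full positive Witt algebra $\s\{e_1,e_2,e_3,\dots\}$ a Lie algebra; truncating at degree $n$ is harmless since $\s\{e_{n+1},e_{n+2},\dots\}$ is an ideal there. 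Assigning $\deg(e_i)=i$ turns $\Lf=\bigoplus_{i=1}^{n}\Q e_i$ into an $\N$-graded Lie algebra with one-dimensional homogeneous components, and all structure constants lie in $\Z\subseteq\Q$.

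Next I would verify that $\Lf$ is filiform. Since $[e_1,e_{m-1}]=(m-2)e_m$ is a nonzero multiple of $e_m$ for $m\ge 3$, an easy induction gives $\Lf^{j}=\s\{e_{j+2},\dots,e_n\}$ for all $j\ge 1$, the reverse inclusion being forced by the grading. Hence $\Lf^{n-2}=\s\{e_n\}\neq 0$ and $\Lf^{n-1}=0$, so $c(\Lf)=n-1$ and $\Lf$ is filiform nilpotent of dimension $2^k-1$.

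The core of the argument is the derived series, where I would prove by induction on $i\ge 1$ that
\[
\Lf^{(i)}=\s\{e_{2^{i+1}-1},\,e_{2^{i+1}},\dots,e_n\},
\]
understood to be $0$ as soon as $2^{i+1}-1>n$. The case $i=1$ is the computation $\Lf^{(1)}=[\Lf,\Lf]=\s\{e_3,\dots,e_n\}$ from above. For the inductive step, suppose $\Lf^{(i)}=\s\{e_r,\dots,e_n\}$ with $r=2^{i+1}-1\le n$. Then $\Lf^{(i+1)}=[\Lf^{(i)},\Lf^{(i)}]$ is spanned by the brackets $[e_a,e_b]=(b-a)e_{a+b}$ with $a,b\ge r$; taking $a=r$ and $b=r+t$ with $t\ge 1$ produces $t\,e_{2r+t}$, where $t\neq 0$ in characteristic zero, so $e_m\in\Lf^{(i+1)}$ for every $m$ with $2r+1\le m\le n$. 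Conversely $e_{2r}$ and all $e_m$ with $m<2r$ do not occur, since the only bracket landing in degree $2r$ is $[e_r,e_r]=0$. Hence $\Lf^{(i+1)}=\s\{e_{2r+1},\dots,e_n\}$ and $2r+1=2^{i+2}-1$, which closes the induction. Taking $n=2^k-1$ now gives $\Lf^{(k-1)}=\s\{e_n\}\neq 0$ while $\Lf^{(k)}=0$ (as $2^{k+1}-1>n$), so $d(\Lf)=k$.

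Finally, the Euler derivation $D(e_i)=i\,e_i$ is a derivation of $\Lf$, since both sides of $D[e_i,e_j]=[De_i,e_j]+[e_i,De_j]$ equal $(i+j)(j-i)e_{i+j}$, and it is non-singular because its eigenvalues $1,2,\dots,n$ are all nonzero; by Proposition \ref{2.7} this yields an affine structure on $\Lf$ (and re-confirms nilpotency). The only delicate point is the exact determination of $\Lf^{(i+1)}$ in the induction: one must verify both that every asserted basis vector genuinely appears — this is precisely where characteristic zero enters, since over $\F_p$ the coefficient $t$ may vanish and shorten the derived series — and that no element of smaller degree creeps in. Everything else is routine.
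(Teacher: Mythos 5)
Your proof is correct and takes essentially the same route as the paper: the truncated positive Witt algebra you construct is in fact isomorphic to the paper's $\Lf_{\frac{9}{10},n}$ (rescaling $f_1=e_1$, $f_j=6\,(j-2)!\,e_j$ for $j\ge 2$ turns your basis into the paper's adapted basis with $[f_1,f_j]=f_{j+1}$ and $[f_2,f_5]=\tfrac{9}{10}f_7$), and your derived-series computation $\Lf^{(i)}=\s\{e_{2^{i+1}-1},\ldots,e_n\}$ is exactly the paper's $\Lg^{(i)}=\Lf_{2^{i+1}-1}$. The only difference is presentational: you verify the Jacobi identity directly from the Witt algebra instead of citing the earlier construction, which makes the argument self-contained.
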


\begin{proof}
In \cite{BU32} we have constructed such Lie algebras, even with an adapted basis, i.e., 
with a basis $(e_1,\ldots ,e_n)$ such that $[e_1,e_j] = e_{j+1}$ for all 
$2\le j\le n-1$. Let $n\ge 3$. The filiform nilpotent Lie algebra $\Lf_{\frac{9}{10},n}$ of dimension $n$ by
is given as follows:
\begin{align*}
[e_1,e_j] & = e_{j+1}, \quad 2\le j\le n-1,\\[0.2cm]
[e_i,e_j] & = \frac{6(j-i)}{j(j-1)\binom{j+i-2}{i-2}}e_{i+j},
\quad 2\le i \le j; \; \; i+j \leq n. 
\end{align*}
Note that the Jacobi identity holds, see \cite{BU32}. Hence $\Lf_{\frac{9}{10},n}$ is a Lie algebra.
It is generated by $e_1$ and $e_2$. For $n\ge 7$ these generators satisfy, among the relations given above,
$[e_1,e_2]=e_3$, $[e_2,e_3]=e_5$ and  $[e_2,e_5]=\frac{9}{10}e_7$. We write $\Lf_k=\s \{e_k,\ldots ,e_n \}$
as before. For $n=2^k-1$ this Lie algebra has derived length equal to $k$, since we have
\[
\Lg^{(0)} =\Lg, \; \Lg^{(1)}=\Lf_3, \;  \Lg^{(2)}=\Lf_7, \ldots , 
\Lg^{(k-1)} =\Lf_{2^{k}-1}=\s \{e_{2^{k}-1}\}, \; \Lg^{(k)} =0. 
\]
The Lie algebras $\Lf_{\frac{9}{10},n}$ are, of course, graded by positive integers. 
In the given adapted basis, $D=\diag(1,2,\ldots, n)$ is an invertible derivation.
Hence there exists an affine structure by proposition $\ref{2.7}$.
\end{proof}

\begin{ex}
For $k=4$, the Lie algebra $\Lf_{\frac{9}{10},15}$ has the following Lie brackets. We have
$[e_1,e_j] = e_{j+1}$ for $2\le j\le 14$, and

$$
\begin{array}{lll}
[e_2,e_3]=e_5,               & [e_3,e_5]=\frac{1}{10}e_8,  & [e_4,e_9]=\frac{1}{132}e_{13}, \\[0.2cm]
\left[e_2,e_4\right]=e_6,    & [e_3,e_6]=\frac{3}{35}e_9,  & [e_4,e_{10}]=\frac{1}{165}e_{14}, \\[0.2cm]
\left[e_2,e_5\right]=\frac{9}{10}e_7, & [e_3,e_7]=\frac{1}{14}e_{10},& [e_4,e_{11}]=\frac{7}{1430}e_{15}, \\[0.2cm]
\left[e_2,e_6\right]=\frac{4}{5}e_8, & [e_3,e_8]=\frac{5}{84}e_{11}, & [e_5,e_6]=\frac{1}{420}e_{11}, \\[0.2cm]
\left[e_2,e_7\right]=\frac{5}{7}e_9, & [e_3,e_9]=\frac{1}{20}e_{12}, & [e_5,e_7]=\frac{1}{420}e_{12}, \\[0.2cm]
\left[e_2,e_8\right]=\frac{9}{14}e_{10},& [e_3,e_{10}]=\frac{7}{165}e_{13},&[e_5,e_8]=\frac{3}{1540}e_{13},\\[0.2cm]
\left[e_2,e_9\right]=\frac{7}{12}e_{11},& [e_3,e_{11}]=\frac{2}{55}e_{14}, & [e_5,e_9]=\frac{1}{660}e_{14},\\[0.2cm]
\left[e_2,e_{10}\right]=\frac{8}{15}e_{12},&[e_3,e_{12}]=\frac{9}{286}e_{15},&[e_5,e_{10}]=\frac{1}{858}e_{15},
\\[0.2cm]
\left[e_2,e_{11}\right]=\frac{27}{55}e_{13},&[e_4,e_5]=\frac{1}{70}e_9,&[e_6,e_7]=\frac{1}{2310}e_{13},\\[0.2cm]
\left[e_2,e_{12}\right]=\frac{5}{11}e_{14},&[e_4,e_6]=\frac{1}{70}e_{10},&[e_6,e_8]=\frac{1}{2310}e_{14}, \\[0.2cm]
\left[e_2,e_{13}\right]=\frac{11}{26}e_{15},&[e_4,e_7]=\frac{1}{84}e_{11},&[e_6,e_9]=\frac{1}{2860}e_{15},\\[0.2cm]
\left[e_3,e_4\right]=\frac{1}{10}e_7,&[e_4,e_8]=\frac{1}{105}e_{12}, & [e_7,e_8]=\frac{1}{12012}e_{15}.\\[0.2cm]
\end{array}
$$
\end{ex}

\begin{rem}
The estimate $\al(k)\le 2^k-1$ also follows from the results in \cite{PAN}.
However, it is more convenient for us to use the Lie algebras $\Lf_{\frac{9}{10},n}$.
\end{rem}

Concerning lower bounds there is the following result of Bokut, see \cite{BOK}.

\begin{prop}\label{3.3}
Let $\Lg$ be a nilpotent Lie algebra over a field of characteristic zero with
derived length $k\ge 4$. Then
\[
\dim (\Lg) \ge 2^{k-1}+2k-3.
\]
\end{prop}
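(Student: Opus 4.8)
The plan is to combine a lower bound for the nilpotency class of $\Lg$ with an estimate of how wide the lower central series of $\Lg$ must be near each step of the derived series.

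First I would bound the class from below. Since $d(\Lg)=k$ we have $\Lg^{(k-1)}\neq 0$, and the inclusion $\Lg^{(k-1)}\subseteq\Lg^{2^{k-1}-1}$ recalled in the preliminaries forces $\Lg^{2^{k-1}-1}\neq 0$, hence $c:=c(\Lg)\geq 2^{k-1}$. Thus the lower central series $\Lg=\Lg^0\supsetneq\Lg^1\supsetneq\cdots\supsetneq\Lg^{c-1}\supsetneq\Lg^c=0$ consists of $c\geq 2^{k-1}$ strict inclusions. Setting $d_i=\dim(\Lg^{i-1}/\Lg^i)$ for $1\le i\le c$ and $E=\sum_{i=1}^c(d_i-1)=\dim\Lg-c$, we get $\dim\Lg=c+E\geq 2^{k-1}+E$, so it remains to prove the excess bound $E\geq 2k-3$. (Extension of scalars changes neither the dimension nor the nilpotency and derived lengths, so one may pass to an algebraically closed ground field if convenient, although this is not needed for the argument.)

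The core is the estimate $E\geq 2k-3$, and the idea is that each doubling in the derived series forces the lower central series to be non-thin ($d_i\geq 2$) at roughly two new places. I would run this along the derived series: for $j=1,\dots,k-1$ the ideal $\Lg^{(j-1)}$ is a non-abelian nilpotent subalgebra, so $\dim\bigl(\Lg^{(j-1)}/[\Lg^{(j-1)},\Lg^{(j-1)}]\bigr)\geq 2$; moreover $\Lg^{(j-1)}\subseteq\Lg^{2^{j-1}-1}$ while $\Lg^{(j)}=[\Lg^{(j-1)},\Lg^{(j-1)}]\subseteq\Lg^{2^{j}-1}$, so the passage from $\Lg^{(j-1)}$ to $\Lg^{(j)}$ takes place inside the stretch of the lower central series of length $2^{j-1}$ between levels $2^{j-1}-1$ and $2^{j}-1$. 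One then shows that a stretch of the lower central series in which almost all successive quotients are $1$-dimensional behaves, modulo everything below it, essentially like an abelian extension, and so cannot accommodate the transition from the non-abelian $\Lg^{(j-1)}$ to a $\Lg^{(j)}$ that is still nonzero all the way down to $\Lg^{(k-1)}\neq 0$. Making this quantitative — following \cite{BOK}, which in turn rests on Hall-type bases of free solvable and polynilpotent Lie algebras as in the work of Hall, Dixmier and Patterson \cite{PAT} — yields at each of the $k-1$ derived steps two new indices $i$ with $d_i\geq 2$; together with the unit coming from $d_1=\dim(\Lg/\Lg^1)\geq 2$ at the top, this gives $E\geq 2k-3$.

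The hard part is exactly this last structural step: controlling the interaction of the two filtrations and showing that raising the derived length genuinely costs about two units of width per doubling, regardless of how the generators are distributed (for instance $d_1=2$ forces $d_2=1$, so the two wide quotients associated to a given step need not both lie within its window, and the count has to be made in an amortized way). In particular the case $k=4$, where the inequality first becomes non-trivial ($\dim\Lg\geq 13$), should be established by a direct and somewhat delicate inspection of thin $4$-step solvable nilpotent Lie algebras; the general bound then follows by carrying the same amortized count through all $k-1$ steps of the derived series. By contrast, the bound $c(\Lg)\geq 2^{k-1}$, the reduction to the excess $E$, and the reduction to an algebraically closed ground field are routine.
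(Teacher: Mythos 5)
First, note that the paper does not actually prove this proposition: it is quoted as a result of Bokut from \cite{BOK}, so there is no internal argument to compare yours against, and your proposal has to stand on its own.

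It does not, because the pivotal claim is false. Your reduction $\dim(\Lg)=c+E$ with $c=c(\Lg)\ge 2^{k-1}$ and $E=\sum_i\bigl(\dim(\Lg^{i-1}/\Lg^i)-1\bigr)$ is fine, but the asserted excess bound $E\ge 2k-3$ for every nilpotent Lie algebra of derived length $k$ is contradicted by an example in this very paper. The $14$-dimensional algebra of Proposition \ref{3.7} has $k=4$ and nilpotency class $c=11$; its lower central quotients have dimensions $(2,1,1,1,2,1,2,1,1,1,1)$, so $E=14-11=3$, whereas your claim would require $E\ge 2k-3=5$. (That algebra still satisfies Bokut's bound only because $c=11$ exceeds $2^{k-1}=8$ by enough to compensate.) So the two inequalities $c\ge 2^{k-1}$ and $E\ge 2k-3$ cannot be proved independently and then added: there is a genuine trade-off between how long and how thin the lower central series can be, and any correct proof must couple the two filtrations rather than treat the ``excess'' as a separate invariant. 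The problem is already visible in your own bookkeeping, which is internally inconsistent: $2(k-1)$ wide quotients plus one more from $\dim(\Lg/\Lg^1)\ge 2$ would give $E\ge 2k-1$, not $2k-3$. Finally, the step that would carry all the real content --- that a thin stretch of the lower central series behaves like an abelian extension and cannot absorb a derived step --- is only described, never proved, and is ultimately deferred back to \cite{BOK}, i.e.\ to the same citation the paper itself uses in place of a proof.
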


\begin{rem}\label{3.4}
In \cite{BOK} it is shown that the estimate is also true for nilpotent Lie algebras over a 
field of prime characteristic $p$ with $p\ge 5$. For $p=2$ Bokut gives a counterexample, with $k=4$. 
In fact, the following brackets define a nilpotent Lie algebra of dimension
$12$ with nilpotency class $10$ and derived length $k=4$ over a field of characteristic $2$.

$$
\begin{array}{lll}
[e_1,e_2]=e_4,            & [e_3,e_4]=e_6,  & [e_4,e_{9}]=e_{11}, \\[0.2cm]
\left[e_1,e_3\right]=e_5, & [e_3,e_7]=e_8,  & [e_5,e_6]=e_{8}, \\[0.2cm]
\left[e_1,e_6\right]=e_7, & [e_3,e_{9}]=e_{10},& [e_5,e_7]=e_{9}, \\[0.2cm]
\left[e_1,e_8\right]=e_9, & [e_3,e_{11}]=e_{12}, & [e_5,e_{8}]=e_{10}, \\[0.2cm]
\left[e_1,e_{10}\right]=e_{11}, & [e_4,e_5]=e_7, & [e_5,e_{9}]=e_{11}, \\[0.2cm]
\left[e_2,e_5\right]=e_{6},& [e_4,e_6]=e_8, &[e_5,e_{10}]=e_{12},\\[0.2cm]
\left[e_2,e_7\right]=e_{8},& [e_4,e_7]=e_9, & [e_6,e_9]=e_{12},\\[0.2cm]
\left[e_2,e_{9}\right]=e_{10},&[e_4,e_8]=e_{10},&[e_7,e_8]=e_{12}.\\[0.2cm]
\end{array}
$$
\end{rem}

Bokut's result does not apply for $k\le 3$. But in this case we have sharp results
for $\al(k)$, since we have a classification of nilpotent Lie algebras in low dimensions.

\begin{prop}\label{3.5}
Every nilpotent Lie algebra of dimension $n\le 8$ has derived length $k\le 3$.
We have $\al(1)=1$, $\al(2)=3$ and $\al(3)=6$.
\end{prop}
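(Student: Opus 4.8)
The plan is to split the statement into two parts: first, the claim that every nilpotent Lie algebra of dimension $n \le 8$ has derived length $k \le 3$, and second, the computation of the three explicit values $\al(1)=1$, $\al(2)=3$, $\al(3)=6$. For the first part, I would invoke the classification of nilpotent Lie algebras of small dimension over a field of characteristic zero. Complex nilpotent Lie algebras are classified up to dimension $7$ (de Graaf, Magnin, and others), and the classification of $8$-dimensional nilpotent Lie algebras — while containing infinitely many isomorphism classes due to continuous parameters — is nonetheless well understood enough to check the derived series. Rather than inspecting every algebra, the cleanest argument is a counting one: if $d(\Lg) = 4$, then $\Lg^{(1)} \supsetneq \Lg^{(2)} \supsetneq \Lg^{(3)} \supsetneq \Lg^{(4)} = 0$ is a strictly decreasing chain of ideals, and additionally $\Lg \supsetneq \Lg^{(1)}$ (since $\Lg$ is nilpotent and nonzero, hence not perfect). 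So $\dim \Lg \ge 1 + \dim \Lg^{(1)}$. Moreover $\Lg^{(1)}/\Lg^{(2)}$ must have dimension at least $2$: if it were $1$-dimensional, $\Lg^{(1)}$ would be abelian modulo $\Lg^{(2)}$ in a way forcing $\Lg^{(2)} = [\Lg^{(1)},\Lg^{(1)}]$ to already vanish — more carefully, a nilpotent Lie algebra whose derived subalgebra is cyclic modulo its own derived subalgebra is metabelian. Pushing this kind of dimension bookkeeping through the chain, one finds $\dim \Lg \ge 9$ whenever $d(\Lg) \ge 4$. This is essentially the $k=4$ case of the general philosophy behind Proposition~\ref{3.3}, but Bokut's bound $2^{k-1}+2k-3 = 13$ for $k=4$ is not needed and in fact not known to be sharp; we only need the weaker conclusion $\dim \Lg \ge 9$, which is more elementary.

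For the explicit values: $\al(1)=1$ is immediate, since the $1$-dimensional abelian Lie algebra has $\Lg^{(1)}=0$, so $d=1$, and no nonzero algebra has $d=0$. For $\al(2)=3$: the $2$-dimensional Lie algebras are the abelian one ($d=1$) and the unique non-abelian one, which is not nilpotent, so no nilpotent $\Lg$ of dimension $2$ has $d=2$; in dimension $3$ the Heisenberg Lie algebra $\Lh_1$ (equivalently $\Lf$ with $n=3$ in the notation introduced before Definition~\ref{3.5}'s environment) is nilpotent with $d=2$, giving $\al(2) \le 3$, hence $\al(2)=3$. For $\al(3)=6$: the algebra $\Lf_{\frac{9}{10},n}$ with $n = 2^3-1 = 7$ from Proposition~\ref{3.1} has derived length $3$, but that only gives $\al(3) \le 7$; to get $\al(3) \le 6$ I would exhibit a $6$-dimensional nilpotent Lie algebra with $d=3$ explicitly (such an algebra exists — for instance a suitable $6$-dimensional filiform or nearly-filiform algebra whose derived series is $\dim 6, 3, 1, 0$), and then rule out dimension $\le 5$ by going through the classification of nilpotent Lie algebras in dimensions $\le 5$ (all of which are well known to be metabelian, i.e., $d \le 2$), so $\al(3) = 6$.

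The main obstacle is the dimension $\le 5$ metabelian claim and, more seriously, the $8$-dimensional case of the first assertion. For dimensions $\le 6$ the classification lists are short and the derived-length check is routine, and dimension $7$ is handled by the (finite) de Graaf classification. Dimension $8$ is the genuinely delicate point: the classification is not finite, so one must argue structurally rather than by enumeration. I expect the cleanest route is the dimension-counting argument sketched above — showing that $d(\Lg) \ge 4$ forces at least $9$ dimensions via the strict inclusions in the derived series together with the observation that each successive quotient $\Lg^{(i)}/\Lg^{(i+1)}$ for $i \ge 1$ cannot be too small when $\Lg^{(i+1)} \ne 0$ — which avoids any appeal to the $8$-dimensional classification altogether and is where the real content lies. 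Once that bound is in hand, combining it with the $d=3$ witnesses in dimensions $3$ and $6$ finishes everything.
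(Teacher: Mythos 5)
Your treatment of the explicit values $\al(1)=1$, $\al(2)=3$, $\al(3)=6$ is essentially the paper's: trivial for $k=1$; the Heisenberg algebra plus the fact that two-dimensional nilpotent algebras are abelian for $k=2$; and for $k=3$ a six-dimensional witness together with the classification in dimension five. The paper writes the witness down explicitly (a filiform algebra with $[e_1,e_i]=e_{i+1}$ for $2\le i\le 5$, $[e_2,e_5]=e_6$, $[e_3,e_4]=-e_6$); you should do the same rather than merely assert existence, but that is a minor omission.

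The first assertion is where your proposal has a genuine gap. The dimension bookkeeping you describe does not reach $9$. With the bounds you actually justify --- $\dim(\Lg/\Lg^{(1)})\ge 2$, $\dim(\Lg^{(1)}/\Lg^{(2)})\ge 2$, and by the same one-generator argument $\dim(\Lg^{(2)}/\Lg^{(3)})\ge 2$, $\dim(\Lg^{(3)})\ge 1$ --- you get only $\dim(\Lg)\ge 7$ when $d(\Lg)=4$. Even feeding $\al(3)=6$ back into the argument ($\Lg^{(1)}$ has derived length $3$, hence dimension at least $6$, plus at least two more for $\Lg/\Lg^{(1)}$) yields only $\dim(\Lg)\ge 8$, which still fails to exclude the eight-dimensional case that you yourself identify as the delicate one. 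Sharpening the quotient estimates beyond $2$ is precisely the nontrivial content of Bokut's work quoted in Proposition \ref{3.3}, so ``more elementary than Bokut'' and ``reaches $9$'' cannot both hold for this method as sketched. The paper's actual argument is a one-liner that sidesteps all of this: a nilpotent Lie algebra of dimension $n\le 8$ has nilpotency class $c(\Lg)\le n-1\le 7$, hence $\Lg^{7}=0$, and the standard inclusion $\Lg^{(k)}\subseteq\Lg^{2^{k}-1}$ recorded in the Preliminaries gives $\Lg^{(3)}\subseteq\Lg^{7}=0$. That is the missing idea; with it, no classification in dimensions $6$, $7$, $8$ and no quotient estimates are needed for the first claim.
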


\begin{proof}
Let $\Lg$ be a nilpotent Lie algebra of dimension $n\le 8$. Then its nilpotency class
satisfies $c(\Lg)\le 7$, so that $\Lg^7=0$, and $\Lg^{(3)}\subseteq \Lg^7=0$.
Hence $\Lg$ is at most of solvability class $k\le 3$. We can compute $\al(k)$ easily.
The case $k=1$ is trivial. We assume that $\dim (\Lg)\ge 1$ for a nilpotent Lie algebra.
For $k=2$ the classification shows that the standard graded filiform Lie algebra of
dimension $3$, which is the Heisenberg Lie algebra, is the Lie algebra of minimal
dimension with derived length $k=2$. In dimension two, every nilpotent Lie algebra
is abelian, over any field $K$. Again from the classification we see that all nilpotent
Lie algebras of dimension $5$ have derived length $k\le 2$. On the other hand, there
are filiform nilpotent Lie algebras of dimension $6$ with derived length $k=3$.
Here is an example, defined over $\Q$:
\[
\begin{array}{ll}
[e_1,e_i]=e_{i+1}, \; 2\le i\le 5; & [e_2,e_5]=e_6,\\
\left[e_3,e_4\right]=-e_6. & 
\end{array}
\]
This means $\al(3)=6$. 
\end{proof}

It is very natural to consider next the case $k=4$. By proposition $\ref{3.1}$ and $\ref{3.3}$
we have $13\le \al(4)\le 15$. Surprisingly, nothing more seems to be known so far.
The filiform Lie algebra $\Lf_{\frac{9}{10},15}$ of dimension $15$ has derived length $4$. Can we do
better ? For derived length $k\le 3$ there was no better choice than a filiform nilpotent
Lie algebra. It turns out that the Lie algebra $\Lf_{\frac{9}{10},15}$ is optimal among filiform Lie algebras. 
In other words, the minimal dimension of a filiform nilpotent Lie algebra in characteristic zero 
with derived length $k=4$ is equal to $15$.

\begin{prop}\label{3.6}
Let $\Lf$ be a filiform nilpotent Lie algebra of dimension $n\le 14$ over a field $K$ of characteristic
zero. Then $d(\Lf)\le 3$.
\end{prop}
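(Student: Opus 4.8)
The plan is to use an adapted basis together with the filtration by the lower central series to squeeze the derived series, which settles all dimensions up to $11$ at once, and then to handle $n\in\{12,13,14\}$ by an induction on $n$.

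First I would fix an adapted basis $(e_1,\ldots,e_n)$ of $\Lf$, so that $[e_1,e_i]=e_{i+1}$ for $2\le i\le n-1$ and $\Lf^{i}=\s\{e_{i+2},\ldots,e_n\}$ for $1\le i\le n-1$; this is possible because $\Lf$ is filiform. The standard inclusion $[\Lf^{i},\Lf^{j}]\subseteq\Lf^{i+j+1}$ then gives $[e_a,e_b]\in\Lf^{a+b-3}$ for all $a,b$ (with the convention $\Lf^{m}=\Lf$ for $m\le 0$). Since $\Lf^{(1)}=[\Lf,\Lf]=\Lf^{1}=\s\{e_3,\ldots,e_n\}$, the space $\Lf^{(2)}=[\Lf^{(1)},\Lf^{(1)}]$ is spanned by the $[e_a,e_b]$ with $3\le a<b\le n$, each in $\Lf^{a+b-3}\subseteq\Lf^{4}$, so $\Lf^{(2)}\subseteq\Lf^{4}=\s\{e_6,\ldots,e_n\}$; one more iteration shows that $\Lf^{(3)}=[\Lf^{(2)},\Lf^{(2)}]$ is spanned by the $[e_a,e_b]$ with $6\le a<b\le n$, each in $\Lf^{a+b-3}\subseteq\Lf^{10}$, so $\Lf^{(3)}\subseteq\Lf^{10}=\s\{e_{12},\ldots,e_n\}$. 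Hence $\Lf^{(3)}=0$ whenever $n\le 11$, which proves the claim in those dimensions.

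For $n\in\{12,13,14\}$ I would induct on $n$, the dimensions $n\le 11$ being the base. The centre of a filiform Lie algebra is the line $\s\{e_n\}$, and $\Lf/\s\{e_n\}$ is again filiform, of dimension $n-1\le 13$; by the inductive hypothesis $d(\Lf/\s\{e_n\})\le 3$, that is, $\Lf^{(3)}\subseteq\s\{e_n\}$. Suppose for contradiction that $\Lf^{(3)}\ne 0$; then $\Lf^{(3)}=\s\{e_n\}$ and $\Lf^{(2)}$ is a non-abelian subalgebra of $\s\{e_6,\ldots,e_n\}$ with $[\Lf^{(2)},\Lf^{(2)}]=\s\{e_n\}$. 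Pick $u,v\in\Lf^{(2)}$ with $[u,v]=e_n$ and let $p,q$ be the smallest indices with $e_p,e_q$ occurring in $u,v$ respectively. The lowest-filtration part of $[u,v]$ lies inside $[e_p,e_q]\subseteq\Lf^{p+q-3}$ (one degree deeper if $p=q$), and comparison with $[u,v]=e_n$ forces $e_n\in\Lf^{p+q-3}$, i.e.\ $p+q\le n+1$; this leaves only a short explicit list of pairs $(p,q)$ with $p\ge 6$. For each of them one computes $[u,v]$ modulo $\Lf^{n-1}=0$ and expresses, by repeated use of the Jacobi identity, the few structure constants $[e_a,e_b]$ (all with $a,b$ in a bounded range around $p,q$) that can contribute to the coefficient of $e_n$ in terms of the lower brackets $[e_2,e_j]$; the outcome is that this coefficient must vanish, contradicting $[u,v]=e_n$. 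Therefore $\Lf^{(3)}=0$ and $d(\Lf)\le 3$.

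The hard part is precisely this last step: for each admissible $(p,q)$ one has to identify which brackets $[e_a,e_b]$ feed into the $e_n$-component of a commutator of two elements of $\Lf^{(2)}$ and to check, via Jacobi, that they cancel. Since the filtration estimate already disposes of $n\le 11$ and the induction reduces $\Lf^{(3)}$ to a line, only pairs with $p+q$ close to $n$ for $n\in\{12,13,14\}$ remain, so the calculation is finite; it is also where the bound $14$ actually enters, and where a sharpening of $\Lf^{(2)}\subseteq\Lf^{4}$ to $\Lf^{(2)}\subseteq\Lf^{5}$ (if available) would clear $n\le 13$ outright and leave only $n=14$ to be checked by hand. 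A less computational alternative would be to invoke the classification of filiform Lie algebras in dimensions $\le 14$ and verify $d\le 3$ directly on each of the (finitely many, possibly parametrised) isomorphism types.
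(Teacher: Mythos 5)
Your filtration argument for $n\le 11$ is correct, and reducing $\Lf^{(3)}$ to the line $\s\{e_n\}$ by inducting through the centre is a sound and pleasant observation. But the proof has a genuine gap exactly where the proposition has content: for $n\in\{12,13,14\}$ everything is delegated to a ``short explicit list'' of pairs $(p,q)$ for which you assert, without performing the computation, that the $e_n$-coefficient of $[u,v]$ ``must vanish.'' You flag this yourself as the hard part, and it is: those Jacobi computations are the entire proof in the critical dimensions, and nothing in your outline guarantees they come out the way you need. Note also that you are working only with the lower-central-series estimate $[e_a,e_b]\in\Lf^{a+b-3}=\s\{e_{a+b-1},\ldots,e_n\}$, whereas a properly normalized adapted basis satisfies the stronger $[e_i,e_j]\in\s\{e_{i+j},\ldots,e_n\}$ for $i,j\ge 2$ with $i+j\le n$ (plus one exceptional bracket $[e_{i+1},e_{n-i}]=(-1)^i\al e_n$). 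That one extra degree gives $\Lf^{(2)}\subseteq\s\{e_7,\ldots,e_n\}$ and hence $\Lf^{(3)}=0$ for all $n\le 13$ with no case analysis at all, leaving only $n=14$.

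More seriously, the conclusion you propose to verify in the remaining cases is not the one that is actually true. For $n=14$ the dangerous bracket is $[e_7,e_8]=\al_{7,14}e_{14}$, and its coefficient need \emph{not} vanish. What the Jacobi identity (applied to five specific triples of basis vectors) forces, assuming $\al_{7,14}\ne 0$, is that the lower structure constants $\al_{2,5},\al_{3,7},\al_{4,9}$ all vanish --- and this is precisely where characteristic zero enters, since the resulting polynomial system has nontrivial solutions in small positive characteristic. The vanishing of $\al_{3,7}$ pushes $[e_3,e_4]$ into $\s\{e_8,\ldots,e_n\}$, hence $\Lf^{(2)}\subseteq\s\{e_8,\ldots,e_{14}\}$, so the pair $(p,q)=(7,8)$ never occurs inside $\Lf^{(2)}$ and $\Lf^{(3)}=[\Lf^{(2)},\Lf^{(2)}]=0$. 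In other words, the correct mechanism is not a cancellation in the $e_n$-coefficient of $[u,v]$ but an exclusion of the offending leading terms from $\Lf^{(2)}$ altogether; a case analysis hunting for the former would not close. Your sketch also gives no indication of where the hypothesis on the characteristic is used, which is a further sign that the ``finite computation'' cannot be waved through. The skeleton is repairable, but as written the proof is incomplete and its key assertion is wrong in the one dimension ($n=14$) that distinguishes this proposition from the easy filtration bound.
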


\begin{proof}
It is well known that for any filiform Lie algebra $\Lf$ there exists a so called {\it adapted basis} 
$(e_1,\ldots , e_n)$ such that
\begin{align*}
[e_1,e_i] & = e_{i+1}, \;\; i=2,\ldots , n-1 \\
[e_i,e_j] & \in \Lf_{i+j}, \;\; i,j \ge 2,\, i+j \le n \\
[e_{i+1},e_{n-i}] & = (-1)^i\al e_n , \;\;  1\le i <n-1 
\end{align*}
with a certain scalar $\al$, which is automatically zero if $n$ is odd,
and  $\Lf_k=\s \{ e_k,\ldots ,e_n \}$. Here the undefined brackets are zero, and we
set $e_k=0$ for $k>n$. For $n\le 13$ it follows that $\Lf^{(1)}=\Lf_3$, and $\Lf^{(2)}\subseteq \Lf_7$ is
abelian. Hence $\Lf^{(3)}=0$. For $n=14$ we have to do more. Then we write the
Lie brackets with respect to the adapted basis with parameters $\al_{k,s}$, where
$(k,s)$ is contained in the index set
\[
\{(k,s)\in \N \times \N \mid 2 \le k \le [n/2],\,2k+1 \le s \le n \}\cup \{([n/2],n)\}.
\]
These parameters $\al_{k,s}$ have to satisfy certain polynomial equations imposed by the 
Jacobi identity. Furthermore all brackets $[e_i,e_j]$ are completely determined by the brackets
$[e_1,e_i] = e_{i+1}$ for $2\le i\le n-1$ and 
\begin{align*}
[e_k,e_{k+1}] & = \sum_{s=2k+1}^{n}\al_{k,s}e_s , \; 2\le k \le [n/2].
\end{align*}
In particular we have $[e_7,e_8]=\al_{7,14}e_{15}$ and $[e_i,e_j]=0$ for all $i+j\ge 16$.
If $\al_{7,14}=0$, then $\Lf^{(3)}=[\Lf^{(2)},\Lf^{(2)}]=[\Lf_7,\Lf_7]=0$ just as before. 
Hence we may assume that $\al_{7,14}\neq 0$. Now the Jacobi identity applied to the basis vectors
$\{e_4,e_5,e_6\}$, $\{e_2,e_6,e_7\}$, $\{e_2,e_3,e_4\}$, $\{e_2,e_3,e_8\}$ and $\{e_3,e_4,e_5\}$
implies the following equations in the parameters $(x_1,x_2,x_3)=(\al_{4,9},\al_{3,7},\al_{2,5})$:
\begin{align*}
0 & = x_1 (x_2+2x_3),\\
0 & = 3x_1(x_2+2x_3-21x_1)+2(3x_2-x_3)(x_2-x_3),\\
0 & =x_1(5x_1+3x_2).
\end{align*}
The only solution in characteristic zero is $(x_1,x_2,x_3)=(0,0,0)$. This means $\Lf^{(2)}\subseteq \Lf_8$
and $\Lf^{(3)}=[\Lf_8,\Lf_8]=0$.
\end{proof}

In view of this result we have to look for nilpotent Lie algebras $\Lg$ of derived length $k=4$,
which are {\it not} filiform. So let us assume that $\Lg^{(4)}=0$ and $\Lg^{(3)}\neq 0$. It seems very 
useful to follow the ideas in \cite{BOK}. There we start with the equation
\[
\dim (\Lg)=\dim \left( \Lg/\Lg^{(1)}\right)+\dim \left( \Lg^{(1)}/\Lg^{(2)}\right)+
\dim \left( \Lg^{(2)}/\Lg^{(3)}\right)+ \dim \left( \Lg^{(3)}\right).
\]
Then the terms on the right-hand side are estimated under various assumptions. If we assume that
our Lie algebra is generated by $2$ elements, then it follows from  \cite{BOK} that
\begin{align*}
d_1 & =\dim \left( \Lg/\Lg^{(1)}\right) \ge 2, \; d_2 =\dim \left( \Lg^{(1)}/\Lg^{(2)}\right) \ge 4,\\
d_3 & =\dim \left( \Lg^{(2)}/\Lg^{(3)}\right) \ge 6, \; d_4 =\dim \left( \Lg^{(3)}\right) \ge 1.
\end{align*}
To construct such a Lie algebra of minimal dimension, one should minimize
these dimensions. I was not able to find a nilpotent Lie algebra
with $2$ generators and $(d_1,d_2,d_3,d_4)=(2,4,6,1)$. However, the calculations
showed that there is indeed a rational nilpotent Lie algebra with two generators and
$(d_1,d_2,d_3,d_4)=(2,5,6,1)$. It has dimension $14$.

\begin{prop}\label{3.7}
There exists a rational, nilpotent Lie algebra $\Lg$ of dimension $14$ and derived length
$k=4$. It has nilpotency class $c(\Lg)=11$ and is graded by positive integers. 
\end{prop}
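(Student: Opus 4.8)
The plan is to exhibit such a Lie algebra $\Lg$ explicitly by a list of structure constants, and then verify the three asserted properties (derived length $4$, nilpotency class $11$, $\N$-grading) by direct computation, following the design philosophy of Bokut's construction. Since Proposition~\ref{3.3} forces $\dim\Lg\ge 13$ and Proposition~\ref{3.6} rules out filiform examples of dimension $\le 14$, the algebra must be non-filiform; the search is guided by the dimension decomposition
\[
\dim(\Lg)=d_1+d_2+d_3+d_4,\qquad (d_1,d_2,d_3,d_4)=(2,5,6,1),
\]
with $\Lg$ generated by two elements $x,y$. Concretely, I would take a basis $e_1,\dots,e_{14}$ adapted to the positive grading, with $x=e_1$ of degree $1$ and $y=e_2$ of the appropriate degree, so that $\Lg^{(1)}=\s\{e_3,\dots,e_{14}\}$ has codimension $2$, $\Lg^{(1)}/\Lg^{(2)}$ is $5$-dimensional, $\Lg^{(2)}/\Lg^{(3)}$ is $6$-dimensional, and $\Lg^{(3)}=\s\{e_{14}\}$ is $1$-dimensional and central. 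The bracket relations would be written down with each $e_i$ assigned a positive integer degree and every bracket $[e_i,e_j]$ homogeneous, so that gradedness is automatic by construction.

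First I would record the full multiplication table and observe that all products $[e_i,e_j]$ with $\deg(e_i)+\deg(e_j)$ exceeding the top degree vanish, which makes $\Lg$ nilpotent and lets one read off $c(\Lg)$ from the grading: one checks that $\Lg^{10}\neq 0$ (some length-$11$ iterated bracket, necessarily a nonzero multiple of $e_{14}$, is present) while $\Lg^{11}=0$. Next, to pin down the derived length, I would compute the derived series term by term: show $\Lg^{(1)}=\s\{e_3,\dots,e_{14}\}$, then $\Lg^{(2)}=[\Lg^{(1)},\Lg^{(1)}]$ is spanned by the $e_i$ of degree at least the relevant threshold (giving a space of dimension $6+1=7$, i.e. $d_3+d_4$), then $\Lg^{(3)}=[\Lg^{(2)},\Lg^{(2)}]=\s\{e_{14}\}\neq 0$, and finally $\Lg^{(4)}=[\Lg^{(3)},\Lg^{(3)}]=0$ because $e_{14}$ is central. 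This yields $d(\Lg)=4$.

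The one genuinely nontrivial point — the main obstacle — is verifying that the proposed structure constants actually satisfy the Jacobi identity. Because $\Lg$ has $14$ basis vectors there are $\binom{14}{3}$ triples to check in principle, though the positive grading collapses this enormously: the Jacobi expression for $\{e_i,e_j,e_k\}$ is automatically zero unless $\deg e_i+\deg e_j+\deg e_k$ is at most the top degree, and within each degree one obtains a small linear system in the structure constants. I would present the structure constants as the output of solving these Jacobi constraints (exactly as $\Lf_{\frac{9}{10},n}$ arises in Proposition~\ref{3.1}), and note that it suffices to check the identity on triples of basis vectors and on the generating triples involving $e_1$ and $e_2$. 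Once Jacobi is confirmed, the remaining verifications (the three chain computations above) are routine bookkeeping with the grading, and the existence of the nonsingular derivation $D=\diag(\deg e_1,\dots,\deg e_{14})$ — which commutes with the grading by construction — could be added as in Proposition~\ref{3.1}, though it is not required by the statement.
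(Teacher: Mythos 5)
Your outline reproduces the paper's strategy faithfully: the paper's proof is exactly an explicit construction guided by Bokut's decomposition with $(d_1,d_2,d_3,d_4)=(2,5,6,1)$, two generators $e_1,e_2$, a positive grading (realized by the derivations $D=\diag(\al,\be,\al+\be,\dots,7\al+4\be)$), and the verification that $\bigl(\dim\Lg^{(1)},\dim\Lg^{(2)},\dim\Lg^{(3)}\bigr)=(12,7,1)$ and $\Lg^{10}\neq 0$, $\Lg^{11}=0$. Your intended chain computations and your use of the grading to cut down the Jacobi checks are all correct in principle.

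The genuine gap is that you never actually exhibit the algebra. The proposition is an existence statement whose entire content is the witness: a specific list of structure constants on a $14$-dimensional space that simultaneously satisfies the Jacobi identity, is non-filiform (forced by Proposition~\ref{3.6}), has derived length exactly $4$, and class $11$. Writing ``I would present the structure constants as the output of solving these Jacobi constraints'' defers precisely the step that can fail: the Jacobi constraints in a given degree pattern form a nontrivial polynomial system, and there is no a priori guarantee it has a solution with $\Lg^{(3)}\neq 0$. Indeed, the paper reports that the more economical pattern $(2,4,6,1)$ apparently admits \emph{no} such solution, and the $(2,5,6,1)$ example was found only ``after some computation.'' So the existence of consistent structure constants is not routine bookkeeping that can be waved through; until the table is written down (the paper's is a $30$-entry bracket list, e.g.\ $[e_1,e_2]=e_3$, $[e_3,e_4]=-e_6$, $[e_4,e_5]=-3e_{10}$, \dots, $[e_6,e_8]=e_{14}$) and Jacobi is checked for it, nothing has been proved. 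To close the gap you must either supply an explicit multiplication table and verify it, or cite a source that does.
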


\begin{proof}
The idea is to search for a rational, nilpotent Lie algebra $\Lg$ with two generators
$e_1$ and $e_2$ of dimension $14$ satisfying $(d_1,d_2,d_3,d_4)=(2,5,6,1)$ and $c(\Lg)\le 12$.
See \cite{ENS} for the group case. After some computation we found the following Lie algebra:
$$
\begin{array}{lll}
[e_1,e_2]=e_3,            & [e_2,e_7]=2e_8,  & [e_3,e_{12}]=-e_{14}, \\[0.2cm]
\left[e_1,e_3\right]=e_4, & [e_2,e_8]=2e_9,  & [e_4,e_5]=-3e_{10}, \\[0.2cm]
\left[e_1,e_4\right]=e_5, & [e_2,e_{10}]=e_{11},& [e_4,e_6]=-3e_{11}, \\[0.2cm]
\left[e_1,e_5\right]=e_7, & [e_2,e_{13}]=e_{14}, & [e_4,e_{10}]=e_{13}, \\[0.2cm]
\left[e_1,e_6\right]=e_8, & [e_3,e_4]=-e_6, & [e_4,e_{11}]=e_{14}, \\[0.2cm]
\left[e_1,e_8\right]=e_{10},& [e_3,e_5]=-e_8, &[e_5,e_6]=-3e_{12},\\[0.2cm]
\left[e_1,e_9\right]=e_{11},& [e_3,e_6]=-2e_9, & [e_5,e_8]=-e_{13},\\[0.2cm]
\left[e_1,e_{11}\right]=e_{12},&[e_3,e_7]=2e_{10},&[e_5,e_9]=-e_{14},\\[0.2cm]
\left[e_1,e_{12}\right]=e_{13},&[e_3,e_8]=e_{11}, &[e_6,e_7]=2e_{13},\\[0.2cm]
\left[e_2,e_5\right]=e_6, &[e_3,e_{10}]=e_{12},&[e_6,e_8]=e_{14}. \\[0.2cm]
\end{array}
$$
This Lie algebra has integer coefficients and satisfies $d(\Lg)=4$ and $c(\Lg)=11$.
More precisely we have 
\begin{align*}
\left(\dim (\Lg^{(1)}),\ldots , \dim (\Lg^{(3)})\right) & = (12,7,1), \\
\left( \dim (\Lg^1),\ldots ,  \dim (\Lg^{10})\right) & = (12,11,10,9,7,6,4,3,2,1).
\end{align*}

The Lie algebra is graded by positive integers, i.e., has invertible derivations. Examples 
are given by
\begin{align*}
D & =\diag(\al,\be,\al+\be,2\al+\be,3\al+\be,3\al+2\be,4\al+\be, 4\al+2\be, \\
  & \hspace{1.5cm}  4\al+3\be,5\al+2\be,5\al+3\be,6\al+3\be,7\al+3\be,7\al+4\be)
\end{align*}
for suitable $\al,\be \in \Q$.
\end{proof}

\begin{rem}
The results in \cite{BOK} seem to indicate that there is no rational nilpotent Lie algebra
of dimension $13$ with derived length $k=4$. Hence we should have $\al(4)=14$. On the other
hand, this is far from obvious.
\end{rem}

We note an interesting consequence for the degeneration theory of algebras,
which seems not to be in the literature. For background on degenerations see \cite{BU36}.

\begin{prop}
In the variety $\CN_{14}(\C)$ of complex, nilpotent Lie algebras of dimension $14$
there exists a nilpotent Lie algebra of nilpotency class $11$ which is
not a degeneration of any filiform nilpotent Lie algebra of dimension $14$.
\end{prop}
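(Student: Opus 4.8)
The plan is to deduce this statement from Proposition~\ref{3.7} together with Proposition~\ref{3.6}, using a standard semicontinuity argument from degeneration theory. Let $\Lg$ be the $14$-dimensional rational nilpotent Lie algebra constructed in Proposition~\ref{3.7}, regarded now as a complex Lie algebra; it has nilpotency class $c(\Lg)=11$ and derived length $d(\Lg)=4$. First I would recall from \cite{BU36} the relevant facts about the variety $\CN_{14}(\C)$: it is the affine variety of structure constants of $14$-dimensional nilpotent complex Lie algebras, the group $\Aut(\C^{14})=\mathrm{GL}_{14}(\C)$ acts on it by change of basis, and a Lie algebra $\Lm$ is a \emph{degeneration} of $\Ll$ if the orbit $\CO(\Lm)$ lies in the Zariski closure $\overline{\CO(\Ll)}$. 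The key point is that the nilpotency class does not increase under degeneration: the condition $\Lm^{c}=0$ is a closed condition on structure constants (it is given by the vanishing of certain polynomials, namely iterated brackets), so $c(\Lm)\le c(\Ll)$ whenever $\Lm$ is a degeneration of $\Ll$. Likewise, the condition $\Lm^{(k)}=0$ is closed, so $d(\Lm)\le d(\Ll)$ under degeneration.

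Next I would argue by contradiction. Suppose $\Lg$ were a degeneration of some filiform nilpotent Lie algebra $\Lf$ of dimension $14$ over $\C$. Since $\Lf$ is filiform of dimension $14$, Proposition~\ref{3.6} gives $d(\Lf)\le 3$. But degeneration cannot increase the derived length, so we would get $d(\Lg)\le d(\Lf)\le 3$, contradicting $d(\Lg)=4$. Hence $\Lg$ is not a degeneration of any $14$-dimensional filiform nilpotent Lie algebra, which is precisely the assertion. (Alternatively, and giving a slightly stronger-looking statement, one can note that a filiform Lie algebra of dimension $14$ has nilpotency class $13$, while $\Lg$ has class $11<13$; but the relevant obstruction here is the derived length, since class can drop under degeneration whereas here we need an invariant that \emph{cannot} increase — and both $c$ and $d$ are of that kind. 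The cleanest route is via the derived length and Proposition~\ref{3.6}.)

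The only genuine subtlety — which I would address carefully but briefly — is the compatibility of scalars: Proposition~\ref{3.6} is stated over an arbitrary field of characteristic zero, and in particular over $\C$, so it applies directly to complex filiform Lie algebras; and Proposition~\ref{3.7} produces $\Lg$ with integer (hence rational, hence complex) structure constants, with $d(\Lg)=4$ verified there. So no base-change issues arise. I would also want to state explicitly the semicontinuity lemma I am using, namely: if $P$ is a polynomial function on the structure-constant variety that is invariant under the $\mathrm{GL}_n$-action and vanishes on $\CO(\Ll)$, then it vanishes on $\overline{\CO(\Ll)}$ and hence on every degeneration of $\Ll$; applying this to the (invariantly defined) ideal of polynomials cutting out ``$\Lm^{(4)}=0$'' gives the needed monotonicity of $d$.

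The main obstacle is essentially expository rather than mathematical: one must make sure the notion of degeneration is set up so that ``$\Lm^{(k)}=0$'' is manifestly a Zariski-closed, $\mathrm{GL}_n$-stable condition, so that the derived length is lower semicontinuous (i.e. can only drop, not jump, along a degeneration $\Lf \leadsto \Lg$). Once that standard fact is in place — and it is routine, since $\Lm^{(k)}$ is built from iterated brackets of the structure constants — the result is immediate from Propositions~\ref{3.6} and~\ref{3.7}. I would therefore keep the proof short: cite \cite{BU36} for the semicontinuity of the derived length under degeneration, invoke Proposition~\ref{3.6} to bound $d(\Lf)\le 3$ for every $14$-dimensional filiform $\Lf$, and conclude that the class-$11$ algebra $\Lg$ of Proposition~\ref{3.7}, having $d(\Lg)=4$, cannot degenerate to it.
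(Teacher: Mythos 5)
Your proof is correct and follows essentially the same route as the paper: take the dimension-$14$, derived-length-$4$ algebra of Proposition~\ref{3.7}, invoke Proposition~\ref{3.6} to bound the derived length of any $14$-dimensional filiform algebra by $3$, and conclude via the fact that derived length cannot increase along a degeneration. The only difference is that you justify this semicontinuity inline from the Zariski-closedness and $\mathrm{GL}_n$-invariance of the condition $\Lm^{(k)}=0$, whereas the paper simply cites Lemma~1 of \cite{BU09}.
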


\begin{proof}
Denote by $\Lh$ the nilpotent Lie algebra of proposition $\ref{3.7}$ over the
complex numbers. We have $d(\Lh)=4$ and $\dim (\Lh)=14$. Assume that there is a degeneration
$\Lg \ra_{\rm deg}\Lh$ with $\Lg$ filiform nilpotent of dimension $14$.
Then $d(\Lg)\ge d(\Lh)$, see for example \cite{BU09}, Lemma $1$. By proposition 
$\ref{3.6}$ we have $d(\Lg)\le 3$.
This is a contradiction.
\end{proof}

\section{Derived length of finite {\rm p}-groups}

Historically, the question about the derived length is much older for nilpotent groups than
for nilpotent Lie algebras. Indeed, a classical question about finite groups of prime power order, 
so called $p$-groups, is the following, see \cite{BUR1}.

\begin{que}[Burnside 1913]
What is the smallest order of a group with prime power order and derived length $k$ ?
\end{que}

This order is denoted by $p^{\be_p(k)}$. For $1\le k\le 3$ the answer is well known.
A non-abelian $p$-group has order at least $p^3$, so that $\be_p(1)=2$ for all primes $p$.
For $p\ge 5$ we have $\be_p(2)=6$, whereas $\be_2(2)=\be_3(2)=7$, see \cite{B1}.
In general, there are only lower and upper bounds for $k\ge 4$. In \cite{MAN} we find 
interesting results and references on the problem, including the following lower bound for $\be_p (k)$:

\begin{prop}
For $k\ge 4$ and all primes $p$ we have
\[
\be_p (k) \ge 2^{k-1}+2k-4.
\]
\end{prop}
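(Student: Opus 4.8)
The plan is to follow Mann \cite{MAN}; the statement is the $p$-group counterpart of Proposition~\ref{3.3}, with the slightly smaller additive constant $2k-4$ (rather than $2k-3$) reflecting characteristic-$p$ phenomena, as in the example of Remark~\ref{3.4}. Let $G$ be a finite $p$-group with $d(G)=k\ge 4$, put $|G|=p^{n}$ and $d_i=\log_p\lvert G^{(i)}/G^{(i+1)}\rvert$, so that $n=\sum_{i=0}^{k-1}d_i$. The proof rests on two elementary observations which must then be combined. First, Hall's commutator calculus gives $G^{(i)}\subseteq\gamma_{2^{i}}(G)$, where $\gamma_j(G)$ denotes the lower central series; since $G^{(k-1)}\neq 1$ this forces $c(G)\ge 2^{k-1}$, so the lower central series of $G$ — and, recursively, of each $G^{(i)}$, which from its second term on lies inside $G^{(i+1)}$ — supplies a reservoir of about $2^{k-1}$ chief factors of order $p$. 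Second, for $0\le i\le k-2$ the group $G^{(i)}$ is a non-abelian $p$-group, hence has non-cyclic abelianisation, so $d_i\ge 2$; and for small $i$ one can say more, since $G^{(i)}$ still has large derived length $k-i$ — this is the group analogue of Bokut's bounds $\dim(\Lg^{(1)}/\Lg^{(2)})\ge 4$ and $\dim(\Lg^{(2)}/\Lg^{(3)})\ge 6$ used before Proposition~\ref{3.7}.

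The heart of the matter is to \emph{add} these two contributions rather than take the larger one. I would assemble a single subnormal series from $G$ to $1$ interleaving the derived series with the lower central series of its successive terms: inside the layer $G^{(i)}/G^{(i+1)}$ one records the $d_i\ge 2$ ranks, and then descends from $G^{(i+1)}=\gamma_2(G^{(i)})$ through $\gamma_3(G^{(i)}),\gamma_4(G^{(i)}),\dots$ down to $\gamma_{c(G^{(i)})}(G^{(i)})$, picking up about $c(G^{(i)})\ge 2^{\,k-1-i}$ further factors, before one reaches $G^{(i+2)}$ and repeats. Summing the geometric contribution $\sum_i 2^{\,k-1-i}\approx 2^{k-1}$ together with the $k-1$ surplus ranks from the non-abelian layers, and carefully accounting for which factors have already been used, should give $n\ge 2^{k-1}+2k-4$. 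A cleaner variant is to pass to the associated graded Lie ring $L=\bigoplus_{j\ge 1}\gamma_j(G)/\gamma_{j+1}(G)$ over $\F_p$, with $\dim_{\F_p}L=n$: one checks $L^{(m)}\subseteq\bigoplus_j\bigl(G^{(m)}\cap\gamma_j(G)\bigr)\gamma_{j+1}(G)/\gamma_{j+1}(G)$, so $d(L)\le k$, and the degree-$2^{k-1}$ part of $L^{(k-1)}$ is the image of $G^{(k-1)}$ in $\gamma_{2^{k-1}}(G)/\gamma_{2^{k-1}+1}(G)$; when this is nonzero one has $d(L)=k$ and the characteristic-$p$ form of Proposition~\ref{3.3} — valid for every $p$ with the constant $2^{k-1}+2k-4$ — finishes the argument.

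The main obstacle is exactly this combination, and with it the pinning down of the constant. The lower central series of $G^{(i)}$ and of $G^{(i+1)}$ do not line up — $\gamma_3(G^{(i)})$ only \emph{contains} $G^{(i+2)}=\gamma_2(G^{(i+1)})$ — so the interleaved series must be chosen with care to avoid double-counting while still harvesting all $\approx 2^{k-1}+2k-4$ factors, and it is at the last one or two derived layers that the value of the additive constant is decided: in characteristic zero one extra rank is forced (Bokut, Proposition~\ref{3.3}), whereas Remark~\ref{3.4} shows this rank may be absent when $p=2$, which is why one gets $2k-4$ and not $2k-3$. In the Lie-ring variant the residual difficulty is the exceptional case $G^{(k-1)}\subseteq\gamma_{2^{k-1}+1}(G)$, where $L$ has derived length $<k$; but then $c(G)\ge 2^{k-1}+1$, and this extra depth must be re-inserted into the rank count of the second observation to recover the estimate. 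I would also begin by reducing to $G$ of minimal number of generators and, should a uniform induction on $k$ not settle the base case, quote $\be_p(4)\ge 12$ from the low-dimensional classification as in \cite{MAN}.
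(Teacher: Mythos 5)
First, a remark on the comparison itself: the paper does not prove this proposition at all --- it is quoted from Mann \cite{MAN} with a citation, so there is no in-paper argument to measure your attempt against. Judged on its own terms, your proposal is a research plan rather than a proof, and the gap sits exactly where you place it yourself. The two ingredients you isolate are sound: $G^{(i)}\subseteq\gamma_{2^{i}}(G)$ forces $c(G)\ge 2^{k-1}$, and a non-abelian $p$-group has non-cyclic abelianisation, so $d_i\ge 2$ for $i\le k-2$. But each ingredient alone yields only about $2^{k-1}$ or about $2k$, and the entire content of the proposition is their \emph{additive} combination. Your interleaved subnormal series is the right shape of idea, but ``carefully accounting for which factors have already been used, should give $n\ge 2^{k-1}+2k-4$'' is precisely the step that has to be carried out: the lower central series of $G^{(i)}$ and the derived series do not refine one another ($\gamma_3(G^{(i)})$ only contains $G^{(i+2)}$, as you note), and without an explicit bookkeeping lemma nothing prevents the $2^{k-1}$ central factors and the $2k-4$ surplus ranks from overlapping. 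You yourself label this ``the main obstacle,'' which is an admission that the proof is not there.

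The ``cleaner variant'' via the graded Lie ring $L=\bigoplus_j\gamma_j(G)/\gamma_{j+1}(G)$ has two further unresolved points. First, your containment only gives $d(L)\le d(G)$, which is the useless direction; to conclude you need $d(L)=k$, and your identification of the degree-$2^{k-1}$ component of $L^{(k-1)}$ with the image of $G^{(k-1)}$ is exactly the correspondence the paper itself warns may fail (``the derived series of the graded Lie ring does not necessarily correspond to the derived series of the group''). Your fallback in the exceptional case $G^{(k-1)}\subseteq\gamma_{2^{k-1}+1}(G)$ --- ``re-insert the extra depth into the rank count'' --- is again a plan, not an argument. Second, this variant invokes a characteristic-$p$ Lie-ring lower bound with constant $2^{k-1}+2k-4$ valid for \emph{all} primes, including $p=2$ and $p=3$. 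No such statement is available in the paper: Proposition \ref{3.3} gives $2^{k-1}+2k-3$ only in characteristic zero (or $p\ge 5$, per Remark \ref{3.4}), and Remark \ref{3.4} exhibits its failure at $p=2$. The all-$p$ graded-Lie-ring bound you would need is itself a theorem of the same depth as the proposition, so both of your routes reduce the statement to something at least as hard as what is to be proved.
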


The result is very similar to the result in proposition $\ref{3.3}$.
There is a strong link between the results for $p$-groups and graded Lie rings over
$\F_p$. But one must also be cautious however, as the derived series of the graded Lie ring
does not necessarily correspond to the derived series of the group. Furthermore, it is not
clear how to pass from graded Lie rings over $\F_p$ to possibly non-graded Lie algebras
over a field of characteristic zero. \\
For $k=4$ the above bound is not sharp and more is known. We have $\be_p(4)=14$ for all 
primes $p\ge 3$, and for $p=2$ we have $\be_2(4)=13$, see \cite{ERI}. 
The prime $p=2$ always causes some problems. The proofs use indeed often results on graded Lie rings 
of $p$-power order. Denote the smallest order of a graded Lie ring with $p$-power order and derived length $k$
by $p^{\al_p(k)}$. To show that $\be_p(4)=14$ for all $p\ge 5$ it was first shown that $\al_p(4)\ge 14$ for 
all primes $p\ge 5$, see \cite{ERI}.

\section{Derived length and dimension of solvable Lie algebras}

What can we say about $\be(k)$, the minimal dimension of a solvable Lie algebra 
over a field $K$ of characteristic zero with derived length $k$ ?
Clearly we have $\be(k)\le \al(k)$. Moreover, if $\Lg$ is a solvable Lie algebra of dimension $n$
over a field of characteristic zero, then $\Lg^{(1)}$ is nilpotent of dimension at most $n-1$.
Hence we obtain also bounds for $\be(k)$. For detailed results see \cite{PAT}. In
particular it is known that $\be(1)=1$, $\be(2)=2$, $\be(3)=4$ and $\be(4)=7$.
We want to point out the following fact.

\begin{lem}
Let $\Ln$ be a nilpotent Lie algebra of dimension $n$ with $d(\Ln)=k$.
Suppose that $\Ln$ admits a non-singular derivation $D$. Then $\Lg=\Ln\rtimes \langle D \rangle$
is a solvable Lie algebra of dimension $n+1$ with $d(\Lg)=k+1$.
\end{lem}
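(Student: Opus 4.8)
The plan is to show first that $\Lg=\Ln\rtimes\langle D\rangle$ is solvable, then pin down its derived length exactly as $k+1$. Write $\Lg=\Ln\oplus Kd$ as a vector space, where $d$ acts on $\Ln$ by the derivation $D$, so that $[d,x]=D(x)$ for $x\in\Ln$ and $[x,y]$ is the bracket in $\Ln$. Since $[\Lg,\Lg]\subseteq\Ln$ (every bracket lands in $\Ln$: brackets within $\Ln$ stay in $\Ln$, and $[d,x]=D(x)\in\Ln$), we get $\Lg^{(1)}\subseteq\Ln$, hence $\Lg^{(i)}\subseteq\Ln^{(i-1)}$ for all $i\ge 1$. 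As $\Ln$ is solvable with $\Ln^{(k)}=0$, this already gives $\Lg^{(k+1)}=0$, so $\Lg$ is solvable with $d(\Lg)\le k+1$. The dimension count $\dim\Lg=n+1$ is immediate.

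The heart of the matter is the reverse inequality $d(\Lg)\ge k+1$, equivalently $\Lg^{(k)}\neq 0$. The first move is to prove $\Lg^{(1)}=\Ln$. The inclusion $\Lg^{(1)}\subseteq\Ln$ is clear; for the reverse, I would use that $D$ is non-singular, hence surjective on $\Ln$, so every $x\in\Ln$ equals $D(y)=[d,y]\in[\Lg,\Lg]=\Lg^{(1)}$ for some $y\in\Ln$. Thus $\Lg^{(1)}=\Ln$, and therefore $\Lg^{(i+1)}=[\Lg^{(i)},\Lg^{(i)}]=[\Ln^{(i-1)},\Ln^{(i-1)}]=\Ln^{(i)}$ for $i\ge 1$ — the derived series of $\Lg$ is just that of $\Ln$ shifted by one step. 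In particular $\Lg^{(k)}=\Ln^{(k-1)}$, which is nonzero because $d(\Ln)=k$. Combined with the upper bound, $d(\Lg)=k+1$.

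The remaining point is that $\Lg$ really is a Lie algebra, i.e. the bracket satisfies the Jacobi identity; this is the standard fact that a semidirect product by a derivation is a Lie algebra — the only non-obvious instances of Jacobi involve one copy of $d$ and two elements $x,y\in\Ln$, and there it reduces precisely to $D$ being a derivation: $D([x,y])=[D(x),y]+[x,D(y)]$. I would state this in one line rather than expand it. The step I expect to require the most care is the surjectivity argument giving $\Lg^{(1)}=\Ln$: it is exactly where non-singularity of $D$ is used, and without it one would only get $\Lg^{(1)}\subseteq\Ln$ with possibly strict inclusion, breaking the clean shift of the derived series. No characteristic or algebraic-closedness hypothesis is needed here; the statement and proof work over any field.
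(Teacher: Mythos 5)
Your proof is correct and follows essentially the same route as the paper: both arguments hinge on the observation that non-singularity of $D$ gives $\Ln=D(\Ln)\subseteq[\Lg,\Lg]\subseteq\Ln$, hence $\Lg^{(1)}=\Ln$ and $\Lg^{(i+1)}=\Ln^{(i)}$ for all $i$, from which $d(\Lg)=k+1$ follows. You simply spell out the upper bound, the Jacobi identity, and the induction in more detail than the paper does.
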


\begin{proof}
The Lie bracket of $\Lg$ is given by
\[
[(x,rD),(y,sD)]=([x,y]_{\Ln}+rD(y)-sD(x),0)
\] 
for $r,s\in K$ and $x,y\in \Ln$. Since $D$ is non-singular, we have
$\Ln=D(\Ln)\subseteq [\Lg,\Lg]\subseteq \Ln$, so that $\Ln=[\Lg,\Lg]$ and $\Lg^{(i+1)}=\Ln^{(i)}$
for all $i$. This finishes the proof.
\end{proof}

Unfortunately most nilpotent Lie algebras of dimension $n\ge 7$ do not admit a non-singular derivation.
On the other hand, the Lie algebras of proposition $\ref{3.1}$ and $\ref{3.7}$ 
do admit such a derivation. Hence we obtain the following results.

\begin{prop}\label{5.2}
For any $k\ge 3$ there is a solvable rational Lie algebra $\Lg$ of dimension $2^{k-1}$ with
$d(\Lg)=k$. Its nilradical is filiform nilpotent.
\end{prop}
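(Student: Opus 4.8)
The plan is to apply the Lemma from this section to the filiform nilpotent Lie algebras $\Lf_{\frac{9}{10},n}$ of Proposition \ref{3.1}. Recall that for $n=2^{k-1}$ the construction in Proposition \ref{3.1} (applied with $k$ replaced by $k-1$) produces a rational $\N$-graded filiform nilpotent Lie algebra $\Lf=\Lf_{\frac{9}{10},2^{k-1}}$ of dimension $2^{k-1}$ with $d(\Lf)=k-1$, and — crucially — it carries the non-singular derivation $D=\diag(1,2,\ldots,2^{k-1})$ in the adapted basis. So first I would invoke Proposition \ref{3.1} to fix this $\Lf$ together with its invertible derivation $D$.

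Next I would form the semidirect product $\Lg=\Lf\rtimes\langle D\rangle$ and quote the Lemma directly: since $\Lf$ is nilpotent of dimension $2^{k-1}$ with $d(\Lf)=k-1$ and admits the non-singular derivation $D$, the Lemma gives that $\Lg$ is solvable of dimension $2^{k-1}+1$ with $d(\Lg)=(k-1)+1=k$. The only discrepancy with the claimed statement is the dimension: the Proposition asserts dimension $2^{k-1}$, not $2^{k-1}+1$. I expect the resolution is an off-by-one in how one indexes the family — i.e.\ one should start from $\Lf=\Lf_{\frac{9}{10},2^{k-1}-1}$, which by Proposition \ref{3.1} has dimension $2^{k-1}-1$ and derived length $k-1$, and still admits the non-singular derivation $\diag(1,\ldots,2^{k-1}-1)$; then $\Lg=\Lf\rtimes\langle D\rangle$ has dimension $(2^{k-1}-1)+1=2^{k-1}$ and $d(\Lg)=k$, matching the statement exactly. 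Rationality is inherited because $\Lf$ is defined over $\Q$ and $D$ has integer entries, so the bracket relations of $\Lg$ have rational (indeed integer) structure constants.

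Finally I would record that the nilradical of $\Lg$ is $\Lf$ itself: in the proof of the Lemma one sees $[\Lg,\Lg]=\Lf$, and since $\Lg/\Lf$ is one-dimensional and $\Lf$ is a nilpotent ideal of codimension one on which $\mathrm{ad}(D)=D$ acts invertibly, $\Lf$ is exactly the maximal nilpotent ideal; hence the nilradical is filiform nilpotent as claimed. For $k=3$ this recovers the dimension-$4$ solvable algebra with $d=3$ consistent with $\be(3)=4$ mentioned above.

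The main obstacle is purely bookkeeping: making sure the index shift between ``$\Lf_{\frac{9}{10},n}$ of dimension $n$'' and the desired dimension $2^{k-1}$ is done consistently, and checking that $\Lf_{\frac{9}{10},2^{k-1}-1}$ genuinely has derived length $k-1$ (this is precisely the computation $\Lg^{(0)}=\Lg$, $\Lg^{(1)}=\Lf_3$, $\Lg^{(2)}=\Lf_7,\ldots,\Lg^{(k-2)}=\Lf_{2^{k-1}-1}=\s\{e_{2^{k-1}-1}\}$, $\Lg^{(k-1)}=0$ from the proof of Proposition \ref{3.1}). Everything else — solvability, the dimension count, the derived length jump, rationality, and the identification of the nilradical — follows immediately from the Lemma and Proposition \ref{3.1} with no further work.
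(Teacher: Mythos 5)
Your proof is correct and is precisely the argument the paper intends: apply the Lemma of this section to the filiform algebra $\Lf_{\frac{9}{10},\,2^{k-1}-1}$ of Proposition \ref{3.1} (with $k$ replaced by $k-1$, which is legitimate since $k-1\ge 2$) together with its invertible diagonal derivation, and note that the codimension-one nilpotent ideal $\Lf$ must be the nilradical because $\ad D$ acts invertibly on it. Your off-by-one correction in the choice of $n$ is exactly right, and no further steps are needed.
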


\begin{prop}
There exists a rational, solvable Lie algebra $\Lg$ of dimension $15$ and derived length $k=5$.  
\end{prop}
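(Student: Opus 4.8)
The plan is to apply the Lemma on semidirect products with a non-singular derivation to the $14$-dimensional rational nilpotent Lie algebra $\Lg$ of Proposition $\ref{3.7}$. That Lie algebra has $\dim(\Lg)=14$ and $d(\Lg)=4$, and it is graded by positive integers, so by the explicit family of derivations $D=\diag(\dots)$ displayed there (choosing $\al,\be\in\Q$ generic, e.g.\ $\al=1,\be=1$, so that all $14$ diagonal entries are nonzero rationals) it admits a non-singular derivation defined over $\Q$. The Lemma then produces $\Ln\rtimes\langle D\rangle$ — here with $\Ln=\Lg$ — of dimension $14+1=15$ over $\Q$ with derived length $4+1=5$, which is exactly the asserted statement.

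The steps in order are: (i) take $\Lg$ and $D$ from Proposition $\ref{3.7}$, verifying that the diagonal derivation is invertible for a suitable rational choice of $\al,\be$ (any choice making all $14$ entries nonzero works; since the entries are $\al,\be,\al+\be,2\al+\be,\dots$ one checks there are only finitely many "bad" ratios $\al/\be$ to avoid, so rational values exist); (ii) form $\Lg\rtimes\langle D\rangle$ with bracket $[(x,rD),(y,sD)]=([x,y]_{\Lg}+rD(y)-sD(x),0)$ as in the Lemma; (iii) invoke the Lemma verbatim to conclude solvability, $\dim=15$, and $d=5$. Everything is over $\Q$ because $\Lg$ has integer structure constants and $D$ can be chosen with rational (indeed integer) entries.

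I expect essentially no obstacle here: the statement is a direct corollary of the Lemma together with Proposition $\ref{3.7}$, and the only thing to check is that the graded structure of $\Lg$ yields a genuinely \emph{non-singular} derivation over $\Q$, which is immediate from the displayed one-parameter-family $D=\diag(\al,\be,\al+\be,\dots,7\al+4\be)$ once one picks $\al,\be\in\Q$ avoiding the finitely many proportionalities that would kill a diagonal entry. In fact one could simply record that this Proposition stands to Proposition $\ref{3.7}$ exactly as Proposition $\ref{5.2}$ stands to Proposition $\ref{3.1}$, so the proof is a single application of the Lemma.
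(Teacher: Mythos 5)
Your proposal is correct and is exactly the paper's intended argument: the paper introduces the proposition with ``the Lie algebras of proposition \ref{3.1} and \ref{3.7} do admit such a derivation. Hence we obtain the following results,'' so the proof is precisely the application of the semidirect-product Lemma to the $14$-dimensional algebra of Proposition \ref{3.7} with a rational non-singular diagonal derivation (e.g.\ $\al=\be=1$ gives entries $1,1,2,3,4,5,5,6,7,7,8,9,10,11$, all nonzero). Nothing further is needed.
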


It is interesting to note that the existence of a regular automorphism of finite order for
$\Lg$ gives a strong condition on the derived length of $\Lg$, see \cite{STW}:

\begin{prop}
Suppose that  $\Lg$ is a Lie algebra admitting a regular automorphism of finite period $n\ge 4$.
Then $\Lg$ is solvable with $d(\Lg)\le 2^{n-4}+\lfloor \log_2(n-1)\rfloor$. If $n=p\ge 5$
is a prime number then $\Lg$ is nilpotent with $d(\Lg)\le 2^{p-5}+\lfloor \log_2(p-3)\rfloor$.
\end{prop}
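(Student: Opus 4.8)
The proposition is in essence a restatement, in Lie algebra language, of the main result of \cite{STW}, so I would only have to organize the argument and indicate where the two constants come from. Recall that an automorphism $\phi$ of $\Lg$ is called \emph{regular} when it is fixed-point-free, i.e. when $\{x\in\Lg : \phi(x)=x\}=0$. The first step is to pass to a graded situation. Since $\phi^n=\id$ and $\mathrm{char}\,K=0$, the minimal polynomial of $\phi$ divides $t^n-1$, which has distinct roots over any extension $\ov K$ of $K$ containing a primitive $n$th root of unity $\om$; hence $\ov\Lg:=\Lg\otimes_K\ov K$ splits into eigenspaces
\[
\ov\Lg=\bigoplus_{j\in\Z/n\Z}\ov\Lg_j,\qquad \ov\Lg_j=\{x\in\ov\Lg : \phi(x)=\om^{\,j}x\}.
\]
Equivariance of the bracket under $\phi$ gives $[\ov\Lg_i,\ov\Lg_j]\subseteq\ov\Lg_{i+j}$, so $\ov\Lg$ is a $\Z/n\Z$-graded Lie algebra, and the hypothesis that $\phi$ is regular is exactly the condition $\ov\Lg_0=0$. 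Since $(\Lg\otimes_K\ov K)^{(m)}=\Lg^{(m)}\otimes_K\ov K$ and likewise for the lower central series, the derived length and the nilpotency class are unchanged by this scalar extension, so it suffices to bound $d(\ov\Lg)$ and, in the prime case, $c(\ov\Lg)$.

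The heart of the solvable part is Kreknin's theorem in its sharp quantitative form: a $\Z/n\Z$-graded Lie algebra $L=\bigoplus_{j\neq 0}L_j$ with trivial zero component is solvable with $d(L)\le\ka(n):=2^{n-4}+\lfloor\log_2(n-1)\rfloor$. I would establish this by the usual two-layer argument. First, a purely combinatorial lemma on finite sequences over $\Z/n\Z\setminus\{0\}$: once such a sequence is long enough, it contains a nonempty block of consecutive terms whose sum is $\equiv 0 \pmod n$; in the refined version one tracks how the threshold length transforms under a halving recursion, and it is precisely this recursion that produces the term $2^{n-4}$ together with the logarithmic correction $\lfloor\log_2(n-1)\rfloor$. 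Second, a Lie-theoretic translation: every element of $L^{(m)}$ is a linear combination of iterated brackets of homogeneous elements, the degree of such a bracket is the sum of the degrees of its entries, and a consecutive zero-sum block is turned, after repeated use of the Jacobi identity, into an inner bracket lying in $L_0=0$. Feeding the combinatorial threshold into this translation gives $L^{(\ka(n))}=0$.

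For the prime case $n=p\ge 5$ I would first invoke the Higman--Kreknin--Kostrikin theorem: a $\Z/p\Z$-graded Lie algebra with trivial zero component is nilpotent of class bounded by a function of $p$ alone, so $\ov\Lg$, and hence $\Lg$, is nilpotent. The sharper bound $d(\Lg)\le 2^{p-5}+\lfloor\log_2(p-3)\rfloor$ is then obtained by rerunning the combinatorial step above while exploiting that $\Z/p\Z$ has no proper nonzero subgroups: this rules out the degenerate configurations that forced an extra layer of the halving recursion, so that $(n-4,\,n-1)$ may be replaced by $(p-5,\,p-3)$. The cases $n=2,3$ are excluded because there regularity already forces $\Lg$ abelian, respectively nilpotent of class at most $2$, and the stated formulas would not be meaningful.

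The main obstacle is the sharp form of Kreknin's theorem. Producing \emph{some} zero-sum consecutive block is an easy pigeonhole argument and already yields solvability with a crude bound; the work is in the inductive bookkeeping needed for the optimal exponent, which must (i) set up the halving recursion on the threshold length so that it closes with base $2$ and the correct constant, and (ii) verify that the Jacobi-identity rewriting genuinely forces a vanishing inner bracket into \emph{every} surviving monomial of $L^{(\ka(n))}$, and not merely into some of them. I would expect steps (i) and (ii), and their adaptation to the primality assumption in the nilpotent case, to be where essentially all the difficulty lies.
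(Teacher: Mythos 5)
Your proposal matches the paper's treatment of this proposition: the paper offers no proof at all but simply quotes the result from \cite{STW}, and your reduction --- diagonalizing the regular automorphism over an extension containing the $n$th roots of unity to obtain a $\Z/n\Z$-graded Lie algebra with trivial zero component, then invoking the sharp Kreknin-type bound (and Higman--Kreknin--Kostrikin for $n=p$ prime) from \cite{STW} --- is exactly the intended route. The quantitative core, namely the constants $2^{n-4}+\lfloor \log_2(n-1)\rfloor$ and $2^{p-5}+\lfloor \log_2(p-3)\rfloor$, is correctly deferred to \cite{STW}, which is no less than what the paper itself does.
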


\section{Nildecomposable groups}

Let $G=AB$ be a product of two subgroups. It is called a factorization of $G$.
The product $AB$ is a subgroup if and only if $AB=BA$. Therefore, $A$ and $B$ are 
permutable in this factorization. There are many results on factorizations of groups.
We refer to \cite{MAN} and the references cited within. The Wielandt-Kegel theorem states that 
if $G$ is {\it finite}, and $A$ and $B$ are nilpotent subgroups, then $G$ is solvable.
Such solvable groups are called nildecomposable, or dinilpotent. 
However, it is not known if the derived length of $G$ can be bounded by $c(A)$ and $c(B)$. 
The first conjecture in this case was that
\[
d(G)\le c(A)+c(B).
\]
Indeed, if $A$ and $B$ have coprime orders, then this is proved, see \cite{MAN}. Also, if both
$A$ and $B$ are abelian, then $d(G)\le 2$ by It\^{o}'s result. This even holds for infinite
groups. In general however, the conjecture is not true. It was disproved first in \cite{CST}. 
Here several counter examples are given, for groups $G=AB$ with $(d(G),c(A),c(B))=(4,1,2),(5,2,2),(6,2,3)$.
There is even a nilpotent group $G$ of derived length $4$ which is the product of an
abelian group and a $2$-step solvable group. On the other hand, one might still be able to
bound $d(G)$ by a linear function in $c(A)$ and $c(B)$. \\[0.2cm]
It seems perhaps more natural, to bound $d(G)$ by a function depending on $d(A)$ and
$d(B)$. A first result here is due to L. Kazarin, see the reference in \cite{CST}:

\begin{prop}
Let $G=AB$ the product of two subgroups of coprime orders and assume that $G$ is solvable. Then
\[
d(G)\le 2d(A)d(B)+d(A)+d(B).
\]
\end{prop}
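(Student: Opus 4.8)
The plan is to argue by induction on $\abs{G}$, organising the induction step around whether one of the two factors is abelian, and using throughout the standard structural features of coprime factorisations: since $(\abs{A},\abs{B})=1$ we have $A\cap B=1$ and $\abs{G}=\abs{A}\,\abs{B}$, so $A$ is a Hall $\pi$-subgroup and $B$ a Hall $\pi'$-subgroup of the solvable group $G$ (where $\pi=\pi(A)$); moreover for every $N\trianglelefteq G$ one has $N=(N\cap A)(N\cap B)$ and $G/N=(AN/N)(BN/N)$, again coprime factorisations of solvable groups, with $d(N\cap A)\le d(A)$, $d(AN/N)\le d(A)$ and the analogous inequalities for $B$. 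Writing $f(a,b)=2ab+a+b$, the target bound is monotone in each variable, so these reductions are harmless, and one has the recursion $f(a,b)=f(a-1,b)+(2b+1)$. If $d(A)=0$ then $G=B$ and there is nothing to prove, so the statement reduces to two points: (i) the case where one factor, say $A$, is abelian, i.e.\ $d(G)\le 2d(B)+1=f(1,d(B))$; and (ii) the reduction of the general case to (i).

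For (i): if $A$ and $B$ are both abelian then $d(G)\le 2$ by It\^{o}'s theorem, comfortably inside the bound. If $A$ is abelian and $d(B)=b\ge 2$, put $B_{1}=B^{(1)}$ and let $K=B_{1}^{\,G}=\langle B_{1}^{\,a}:a\in A\rangle$ be its normal closure (the displayed generating set being valid because $B_{1}$ is characteristic in $B$ and $G=AB$). In $G/K$ both factors have abelian image, so $d(G/K)\le 2$ by It\^{o}. If one knows that $K\cap B\le B_{1}$ — equivalently, that the normal closure of $B'$ meets $B$ in exactly $B'$ — then $d(K\cap B)\le d(B_{1})=b-1$, the inductive hypothesis gives $d(K)\le 2(b-1)+1$, and $d(G)\le d(K)+d(G/K)\le 2b+1$.

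For (ii): given $A$ with $d(A)=a\ge 2$, set $M=A^{(1)}$ and $N=M^{\,G}=\langle M^{\,b}:b\in B\rangle\trianglelefteq G$; the image of $A$ in $G/N$ is abelian, so (i) applied to $G/N=(AN/N)(BN/N)$ gives $d(G/N)\le 2d(B)+1$. If, as in (i), one has $N\cap A\le M$, so that in the coprime factorisation $N=(N\cap A)(N\cap B)$ we get $d(N\cap A)\le a-1$ and $d(N\cap B)\le d(B)$, then the induction on order yields $d(N)\le f(a-1,d(B))$, whence $d(G)\le d(N)+d(G/N)\le f(a-1,d(B))+2d(B)+1=f(a,d(B))$, closing the induction.

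Thus the whole scheme rests on one delicate claim: for a coprime factorisation $G=AB$ and a characteristic subgroup $M\le A$, the normal closure satisfies $M^{\,G}\cap A\le M$ (together with a rerouting of the argument in the degenerate case $M^{\,G}=G$, where the normal-closure trick collapses). This is precisely the point at which the genuine theory of coprime factorisations of solvable groups must be invoked — the Kegel--Wielandt reduction, conjugacy of Hall subgroups of the factors, and the lemmas of Kazarin controlling how a normal subgroup meets $A$ and $B$ — and I expect pinning it down (or replacing it by a working substitute that still loses only one unit per derived layer of $A$, to be absorbed by the slack $2d(B)+1$) to be the main obstacle; this is also the structural reason the final estimate is quadratic rather than linear. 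The remaining verifications — the base cases $d(A)=0$ and $d(B)\le 1$, the monotonicity of $f$, and the behaviour of coprime factorisations under passing to $N$ and to $G/N$ — are routine.
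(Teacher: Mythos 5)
The paper does not actually prove this proposition: it is quoted as a known result of L.~Kazarin (via the reference in \cite{CST}) in a survey section, so there is no in-paper argument to compare yours against. Judged on its own terms, your proposal is an honest skeleton rather than a proof, and the hole you yourself flag is a genuine one. The entire induction hangs on the claim that for a coprime factorisation $G=AB$ and a characteristic subgroup $M$ of one factor, the normal closure satisfies $M^{G}\cap A\le M$ (resp.\ $K\cap B\le B^{(1)}$ in your step (i)). This claim is false. Take $G=S_{4}$, $A=\langle(123)\rangle$ of order $3$ and $B$ a Sylow $2$-subgroup, so $B\cong D_{8}$, the orders are coprime and $G=AB$. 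Then $B^{(1)}=\langle(13)(24)\rangle$ has order $2$, but its normal closure in $G$ is the Klein four-group $V_{4}$, which lies inside $B$; hence $K\cap B=V_{4}$ strictly contains $B^{(1)}$. So the inequality $d(K\cap B)\le d(B^{(1)})$ cannot be obtained the way you propose, and the induction as designed does not close. (In this particular example the weaker numerical statement $d(K\cap B)\le d(B)-1$ happens to survive, so a substitute lemma of that form is not obviously hopeless, but you have given no argument for it, and it is exactly here that the real content of Kazarin's theorem lives.)

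Two smaller points. First, your identification $2d(B)+1=f(1,d(B))$ is an arithmetic slip, since $f(1,b)=3b+1$; this does not damage the logic because the stronger bound $2b+1$ would still feed the recursion $f(a,b)=f(a-1,b)+(2b+1)$, but the internal induction in step (i) must then be an induction on the statement ``$A$ abelian $\Rightarrow d(G)\le 2d(B)+1$'' itself, not on the proposition being proved. Second, the degenerate case $M^{G}=G$, where the quotient $G/N$ is trivial and induction on order gives nothing, is acknowledged but not handled. The routine parts of your setup --- $A\cap B=1$, the Hall-subgroup structure, $N=(N\cap A)(N\cap B)$ for $N\trianglelefteq G$, and the behaviour of the factorisation under quotients --- are all correct; the proof is missing precisely at the one step you identified as delicate, and that step is not merely unproved but false as stated.
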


In some special cases better results are known. In \cite{COW} we find:

\begin{prop}
Suppose that $G=AB$ is a finite group with $A$ abelian and $B$ nilpotent and $2$-step
solvable. Then $G$ is solvable with $d(G)\le 4$.
\end{prop}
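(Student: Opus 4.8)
The plan is to reduce the whole question to an It\^{o}-type statement about the normal closure $M:=A^G$ of $A$ in $G$. First, the Wielandt--Kegel theorem quoted above already gives that $G$ is solvable, so only the bound on $d(G)$ needs work. Two elementary observations drive the reduction. Since $A\le M$ we have $G=MB$, hence $G/M\cong B/(B\cap M)$ is a quotient of the $2$-step solvable group $B$, so $G/M$ is metabelian and $G^{(2)}\le M$. Again using $A\le M$, Dedekind's modular law gives $M=A\,(M\cap B)$, so $M$ is the product of the abelian group $A$ and the nilpotent, $2$-step solvable group $C:=M\cap B$; moreover, because $A$ is abelian we have $a^{a'b}=a^{b}$ for $a,a'\in A$ and $b\in B$, so in fact $M=A^{B}$ is generated by the $B$-conjugates of $A$. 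Hence it is enough to prove the claim $(\star)$: \emph{if $H=AC$ is a finite group with $A$ abelian, $C$ nilpotent and $2$-step solvable, and $H$ is generated by the $C$-conjugates of $A$, then $H$ is metabelian}. Indeed, applying $(\star)$ to $M$ gives $M^{(2)}=1$, and then $G^{(4)}=(G^{(2)})^{(2)}\le M^{(2)}=1$, i.e.\ $d(G)\le 4$. The claim $(\star)$ is the natural common generalisation of It\^{o}'s theorem, which is the special case ``$C$ abelian''.

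To prove $(\star)$ I would use induction on $|H|$. For a minimal normal subgroup $N$ of $H$ --- an elementary abelian $p$-group, as $H$ is solvable by Wielandt--Kegel --- the hypotheses descend to $H/N$, so by induction $H/N$ is metabelian, i.e.\ $H^{(2)}\le N$; thus $H^{(2)}$ is abelian and $d(H)\le 3$. The entire difficulty is to remove this last unit. Take $H$ to be a counterexample of least order. Then $H''$ is characteristic and nontrivial; the inductive step forces every minimal normal subgroup of $H$ lying in $H''$ to equal $H''$, and the embedding $H\hookrightarrow H/N_1\times H/N_2$ excludes two distinct minimal normal subgroups. So $H$ has a unique minimal normal subgroup $N=H''$, an elementary abelian $p$-group, and one reduces (treating the case $N\le\Phi(H)$ separately) to $H=N\rtimes K$ with $K$ acting faithfully and irreducibly on $N$. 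One must now bring in the factorization: $N$ is acted on by the abelian group $A$ and by the nilpotent group $C$ (with $C'$ abelian), and $H=A^{C}$, and one has to convert these data, via the commutator calculus --- the Hall--Witt identity together with the metabelian law $[[x,y],[u,v]]=1$ in $C$ and the fact that $A$ acts through a set of commuting maps --- into a relation that forces $N=1$, contradicting $N=H''\neq1$.

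The main obstacle is exactly this last collapse: proving that a ``primitive-like'' solvable group which also admits an (abelian)-by-(nilpotent metabelian) factorization generated by translates of the abelian factor cannot have derived length $3$. I expect the cleanest implementation is the Lie-ring method alluded to in Section 4: replace $N$, together with the action of the lower central series of $C$, by the associated graded Lie ring over $\F_p$, so that $(\star)$ becomes a statement of the shape ``a Lie algebra over $\F_p$ that is the sum of an abelian subalgebra and a nilpotent metabelian subalgebra, and is generated by the $C$-translates of the abelian one, is metabelian,'' which one can attack by explicit bracketing and weight bookkeeping. As usual the prime $p=2$ will have to be handled separately, and one must check carefully how the factorization interacts with $O_p(H)$ and $\Phi(H)$. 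It is essential throughout that $A$ is \emph{abelian} rather than merely of bounded nilpotency class: that is precisely what makes the It\^{o}-style cancellation available, and it is why the bound $d(G)\le 4$ depends only on $d(A)$ and $d(B)$ and not on the nilpotency class $c(B)$.
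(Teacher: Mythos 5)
The paper offers no proof of this proposition; it is quoted verbatim from Cossey and Wang \cite{COW}, so there is no internal argument to compare yours against and your proposal must stand on its own. The outer layer of your reduction is fine: solvability comes from Wielandt--Kegel, $G/A^G\cong B/(B\cap A^G)$ gives $G^{(2)}\le M:=A^G$, and Dedekind's modular law gives $M=A\,(M\cap B)$. But the proof then rests entirely on your claim $(\star)$, that $M$ is metabelian, and you do not prove it --- you yourself identify ``this last collapse'' as the main obstacle and only gesture at a Lie-ring computation that is not carried out. That claim is not a routine It\^{o}-type cancellation: it is essentially the whole content of the theorem, and the Cossey--Stonehewer examples cited two paragraphs earlier in this section, with $(d(G),c(A),c(B))=(4,1,2)$, show the bound $4$ is attained, so whatever mechanism proves $(\star)$ must be delicate and you give no evidence that it is even true. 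As written, the argument establishes only the reduction, not the bound; unconditionally you do not even obtain $d(M)\le 3$, since the induction by which you derive that already presupposes $(\star)$ for proper quotients.

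There is also a smaller mismatch inside the reduction. You correctly observe $M=A^G=A^B=\langle A^b : b\in B\rangle$, but you then apply $(\star)$ to $H=M$ with $C=M\cap B$, whose hypothesis is that $H$ is generated by the \emph{$C$-conjugates} of $A$. The conjugating elements $b$ range over all of $B$ and need not lie in $M$, so $M$ need not be generated by the $(M\cap B)$-conjugates of $A$. This is repairable --- restate $(\star)$ for a normal subgroup $M=A\,(M\cap B)$ that is normalized by $B$ and generated by the $B$-conjugates of $A$ --- but as formulated the hypothesis of $(\star)$ is not verified for the group to which you apply it.
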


\section{Nildecomposable Lie algebras}

For nildecomposable Lie algebras over a field of characteristic zero there are similar 
questions as for finite groups, see \cite{BAT} for results and references. 
In particular it is not known, whether or not
there is a counterexample to the following estimate for finite-dimensional
nildecomposable Lie algebras $\Lg=\La+\Lb$ over a field of charactersitic zero,
\begin{align}\label{es}
d(\Lg) & \le c(\La)+c(\Lb).
\end{align}

In \cite{BAT} there is a counterexample given for infinite-dimensional Lie algebras.
The proof is inspired by the counterexamples for groups in \cite{CST}.
If one of the subalgebras, say $\Lb$, is an ideal, then the estimate \eqref{es} is correct.
Indeed, then $\Lg$ is an extension of $\Lb$ by $\Lg/\Lb$, so that $d(\Lg)\le d(\Lg/\Lb)+ d(\Lb)$.
Since $\Lg/\Lb=(\La+\Lb)/\Lb\simeq \La/(\La\cap \Lb)$ we obtain
\[
d(\Lg)\le d(\La)+d(\Lb)\le c(\La)+c(\Lb).
\]
If $\La$ and $\Lb$ are abelian, It\^{o}'s argument can also be applied to Lie algebras.
The result is again, that $\Lg$ is at most $2$-step solvable. This is well known, see for 
example \cite{PET9} and the references given therein. 
The next easiest case should be that $\La$ is abelian and $\Lb$ is of nilpotency class $2$.
There is indeed a result here, obtained in \cite{PET0}.

\begin{prop}
Let $\Lg=\La+\Lb$ be a Lie algebra over an arbitrary field of characteristic different from $2$
which is the sum of an abelian subalgebra $\La$ and a subalgebra $\Lb$ of nilpotency class $2$.
Then $d(\Lg)\le 10$.
\end{prop}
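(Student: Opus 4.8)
The plan is to reduce, in a bounded number of steps, to the Lie-algebra version of It\^{o}'s theorem recalled above — a sum of two abelian subalgebras is at most $2$-step solvable — at a cost of a few derived steps per reduction, the total coming to $10$.

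One may assume $K$ algebraically closed; $\Lg$ is finite-dimensional and solvable (by Goto's theorem, or in any case as a consequence of the bound below), so $d(\Lg)$ is finite and the point is to bound it independently of $\dim\Lg$. Put $\Lc=[\Lb,\Lb]$. Since $c(\Lb)=2$ we have $[\Lb,\Lc]=0$, so $\Lc$ is an abelian subalgebra lying in the centre of $\Lb$. Let $\Li$ be the ideal of $\Lg$ generated by $\Lc$. Because $\Lg=\La+\Lb$ and $[\Lb,\Lc]=0$, one has $[\Lg,\Lc]=[\La,\Lc]$; iterating this and applying the Jacobi identity together with $[\Lb,\Lc]=0$ shows that $\Li$ is spanned by the iterated brackets $[\La,[\La,\ldots,[\La,\Lc]\ldots]]$, that is, $\Li$ is the $\La$-submodule of $\Lg$ generated by $\Lc$; in particular $\Li\subseteq\Lc+[\La,\Lg]$. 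In $\Lg/\Li$ the images of $\La$ and of $\Lb$ are both abelian (the image of $\Lc=[\Lb,\Lb]$ being zero), so $\Lg/\Li$ is a sum of two abelian subalgebras and It\^{o}'s theorem gives $d(\Lg/\Li)\le 2$. Hence $\Lg^{(2)}\subseteq\Li$, and it suffices to show $d(\Li)\le 8$.

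For $\Li$ one uses the same circle of ideas internally. The relevant ``commuting-layer'' relations, all obtained from the Jacobi identity and $[\Lb,\Lc]=0$, are $[\Lb,\Lc]=0$, $[\Lb,[\La,\Lc]]\subseteq[\La,\Lc]$, $[\Lc,[\La,\Lc]]\subseteq[\La,\Lc]$, and their iterates; together with the module description $\Li=\Lc+[\La,\Lc]+[\La,[\La,\Lc]]+\cdots$ they allow one to build, layer by layer, an ideal series $0=\Li_{0}\subseteq\Li_{1}\subseteq\cdots\subseteq\Li_{m}=\Li$ with abelian successive quotients. Bounding the number of layers — using that $\Li\cap\Lb$ is still of nilpotency class at most $2$ and $\Li\cap\La$ is abelian, and peeling off first the part inside $\Lc$ and then the parts produced by successive brackets with $\La$ — should give $m\le 8$, hence $d(\Li)\le 8$ and $d(\Lg)\le 2+8=10$.

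I expect this last bound on $d(\Li)$ to be the main obstacle. The clean move would be to prove $\Li=(\Li\cap\La)+(\Li\cap\Lb)$ and simply re-apply the whole scheme to $\Li$; but a decomposition $\Lg=\La+\Lb$ is not in general inherited by ideals, so the bound on $d(\Li)$ must be extracted directly from its module description and the vanishing $[\Lb,\Lc]=0$ by an explicit and somewhat intricate commutator calculus. It is this book-keeping, rather than any single identity, that is responsible for the non-optimal constant $10$ here, as opposed to the value $4$ available in the analogous group situation \cite{COW}. Finally one should check that all the Jacobi manipulations are valid over an arbitrary field of characteristic $\ne 2$ — the restriction on the characteristic being needed already for It\^{o}'s theorem for Lie algebras — and handle the degenerate low-dimensional cases separately.
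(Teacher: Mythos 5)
A preliminary remark: the paper does not prove this proposition at all --- it is quoted verbatim from Petravchuk \cite{PET0} --- so there is no in-paper argument to measure yours against; your proposal has to stand on its own. It does not. The first reduction is sound: with $\Lc=[\Lb,\Lb]$ one has $[\Lb,\Lc]=0$, the ideal $\Li$ generated by $\Lc$ is the $\La$-submodule $\Lc+[\La,\Lc]+[\La,[\La,\Lc]]+\cdots$, the quotient $\Lg/\Li$ is a sum of two abelian subalgebras, so It\^{o} gives $d(\Lg/\Li)\le 2$ and hence $d(\Lg)\le 2+d(\Li)$. But the entire content of the theorem is then a \emph{dimension-independent} bound $d(\Li)\le 8$, and for this you offer only the assertion that a layer-by-layer construction ``should give $m\le 8$''. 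Nothing you write bounds the number of layers: the natural filtration $\Li_j=\Lc+[\La,\Lc]+\cdots$ has length growing with $\dim(\Lg)$, you do not show its successive quotients are abelian, and you produce no alternative ideal series of bounded length. The ``commuting-layer'' relations you list, such as $[\Lb,[\La,\Lc]]\subseteq[\La,\Lc]$ and $[\Lc,[\La,\Lc]]\subseteq[\La,\Lc]$, are correct (they follow from $[\Lb,\Lc]=[\Lc,\Lc]=0$, $\Lg=\La+\Lb$ and Jacobi), but they only say that $[\La,\Lc]$ is stable under $\Lb$ and $\Lc$; they give no control over $[[\La,\Lc],[\La,\Lc]]$, which is exactly what a derived-length bound requires. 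So the proof is missing precisely at the step you yourself flag as ``the main obstacle''.

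Two further points confirm that the essential computation has not been done. First, you never locate where the hypothesis of characteristic $\ne 2$ enters; It\^{o}'s theorem for Lie algebras (sum of two abelian subalgebras is metabelian) does not need it, so the restriction must be used inside the commutator calculus for $\Li$ that you have not carried out --- an argument that never uses a hypothesis of the theorem is suspect. Second, your opening appeal to Goto's theorem for solvability is not available here: the proposition is over an arbitrary field of characteristic $\ne 2$, and solvability of nildecomposable Lie algebras is known in characteristic $0$ and $p\ge 5$ but is part of what must be established in general (indeed, the asserted bound $d(\Lg)\le 10$ \emph{is} the solvability statement, so invoking ``the bound below'' is circular). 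The correct reaction is to consult \cite{PET0}, where the bound on the derived length of the ideal generated by $[\Lb,\Lb]$ is obtained by an explicit commutator analysis; your sketch reproduces the easy outer reduction but not the theorem.
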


It appears to me that this bound is not yet optimal. Even in this case there is no counterexample 
known to \eqref{es}, i.e., to $d(\Lg) \le c(\La)+c(\Lb)=3$. Perhaps this is already the correct answer.

\end{document}